\newcolumntype{L}[1]{>{\raggedright\let\newline\\\arraybackslash\hspace{0pt}}m{#1}}
\newcolumntype{C}[1]{>{\centering\let\newline\\\arraybackslash\hspace{0pt}}m{#1}}
\newcolumntype{R}[1]{>{\raggedleft\let\newline\\\arraybackslash\hspace{0pt}}m{#1}}
\newtheorem{thm}{\textbf{Theorem}}
\newtheorem{remark}[thm]{Remark}
\newtheorem{pro}{\textbf{Proposition}}
\newtheorem{lemma}{Lemma}
\newtheorem{corollary}[lemma]{Corollary}
\newtheorem{definition}[lemma]{Definition}
\newcommand{\ignore}[1]{}
\newcommand{\Z}{\mathbb{Z}}
\newcommand{\C}{\mathbb{C}}
\newcommand{\sgn}{\mathrm{sgn}}
\newcommand{\id}{\mathrm{id}}
\def\Z{\ensuremath\mathbb{Z}}
\def\s{\ensuremath\sigma_0}
\def\ss{\ensuremath\sigma_1}
\def\sss{\ensuremath\sigma_{\infty}}
\title{Shabat Polynomials and Monodromy Groups of Trees Uniquely Determined by their Passport}
\author{
Naiomi Cameron\\
Lewis \& Clark College\\
\texttt{ncameron@lclark.edu}

\and 

Mary Kemp\\
Occidental College\\
\texttt{maroldkemp@gmail.com} 

\and

Susan Maslak\\
Ave Maria University\\
\texttt{susan.m.maslak@gmail.com} 

\and

Gabrielle Melamed\\
University of Hawaii at Manoa\\
\texttt{gmelamed@hawaii.edu} 

\and

Richard A. Moy\\
Willamette University\\
\texttt{rmoy@willamette.edu} 

\and

Jonathan Pham\\
University of California - Irvine\\
\texttt{jonatdp1@uci.edu} 

\and 

Austin Wei\\
Pomona College\\
\texttt{abw22014@mymail.pomona.edu}
}
\begin{document}
\maketitle

\begin{abstract}
A dessin d'enfant, or dessin, is a bicolored graph embedded into a Riemann surface. Acyclic dessins can be described analytically by pre-images of certain polynomials, called Shabat polynomials, and also algebraically by their monodromy groups, that is, the group generated by rotations of edges about black and white vertices. In this paper we investigate the Shabat polynomials and monodromy groups of planar acyclic dessins that are uniquely determined by their passports.

\end{abstract}

\section{Introduction}\label{introduction}

Popularized by Grothendieck in his \textit{Esquisse d'un Programme}, the theory of {\em dessins} reaches across and connects multiple disciplines, including graph theory, topology, geometry, algebra and complex analysis. Our motivation for this paper is rooted in one of the fundamental questions in the theory of dessins -- that is, how to distinguish classes of dessins by means of topological, algebraic and/or combinatorial invariants. In this paper, we focus our attention to this question by studying dessins which qualify as trees, the permutation groups generated by rotations of their edges (otherwise known as monodromy groups) and the special polynomials, known as Shabat polynomials, to which they correspond. Monodromy groups in particular are of interest because, as algebraic invariants, their group structure suggests certain properties of the corresponding dessins. 

Our main objective in this paper is to determine the Shabat polynomials (up to isomorphism) and monodromy groups corresponding to every known planar connected acyclic dessin with a passport of size one, the complete list of which was given in \cite{shabat_plane_1994}. We begin in Section \ref{introduction} by providing the basic definitions and notation needed to describe this class of dessins, as well as some necessary background about Shabat polynomials and wreath products; readers already acquainted with these subjects may wish to skip directly to Section \ref{mainresults} for a summary of results. In Section \ref{shabatpolys} we provide a unique (up to isomorphism) Shabat polynomial for each passport of size one corresponding to a (planar) bicolored tree; in Section \ref{monodromygroups} we provide the monodromy groups for such passports.

\subsection{Dessins, Shabat Polynomials and Monodromy Groups}
We begin by providing a terse exploration of the object known as a {\em dessin}. For more detailed and comprehensive literature on the subject, see \cite{shabat_plane_1994}. For the purposes of this paper, we begin with the observation that dessins may be realized by meromorphic functions known as {\em Belyi maps}.

\begin{definition}
 Let $X$ be a Riemann surface.  A {\bf Belyi map} is a meromorphic function $F:X\to \mathbb{P}^{1}({\mathbb{C}})$ that is unramified outside of $\{0,1,\infty\}$. That is, all critical values of $F$ are contained in $\{0,1,\infty\}.$  Here we may consider $\mathbb{P}^{1}({\mathbb{C}})$ as just $\mathbb{C}\cup\{\infty\}.$
\end{definition}

\begin{remark}
In this paper, we will only be concerned with the case where $X=\mathbb{P}^{1}({\mathbb{C}})$, so the reader may assume throughout that $X$ is the Riemann sphere.
\label{firstremark}
\end{remark}

\noindent Grothendieck's notion of a {\em dessin d'enfant} is a way to combinatorially characterize Belyi maps.  If $F$ is a Belyi map, then $F^{-1}([0,1])$, that is, the preimage of the interval $[0,1],$ has the structure of a bicolored connected graph embedded in $X$.  The basic structure of the bicolored graph $\Delta_F$ associated with a Belyi map $F$ is given when we identify $F^{-1}(0)$ as the set of black vertices, $F^{-1}(1)$ as the set of white vertices, $F^{-1}((0,1))$ as the set of edges and $F^{-1}(\infty)$ as the set of faces. Note that the degrees of the black and white vertices of $\Delta_F$ correspond to the multiplicities of the roots of $F$ and $F-1$, respectively.  

These structure of $\Delta_F$ can be captured by the notion of a {\em dessin}, the relatively simple combinatorial characterization given by Grothendieck.

\begin{definition}
 A {\bf dessin d'enfant} or {\bf dessin} is a connected bicolored graph equipped with a cyclic ordering of the edges (oriented counterclockwise) around each vertex.  
\end{definition}

Given a Belyi map $F$, it is not difficult to use the procedure described above to visualize the dessin $\Delta_F$ to which $F$ corresponds.  However, recovering a Belyi map from a given dessin is a much more difficult proposition. Given a dessin $\Delta_F$, a corresponding Belyi map $F$ can be determined (uniquely up to isomorphism over $\mathbb{C}\cup \{\infty\}$) by considering the degrees of the vertices of $\Delta_F$ and the resulting system of polynomial equations involving roots and poles of $F$.

\begin{remark}
The choice made in Remark \ref{firstremark} will guarantee that the dessin recovered from a Belyi map $F$ qualifies as a planar graph. Further, in order to guarantee that the dessin resulting from $F$ is a tree (i.e., having only one face), we could require $F$ to be a polynomial.
\end{remark}

\begin{definition}A {\bf Shabat polynomial} is a polynomial $F:{\mathbb{C}}\rightarrow {\mathbb{C}}$ whose critical values are contained in $\{0,1\}$.
\end{definition}

That is, a Shabat polynomial is a Belyi map which has only one pole (which is at infinity); hence, its corresponding dessin will be a tree. (Shabat polynomials can be defined more broadly as in \cite{shabat_plane_1994} as a generalized Chebyshev polynomials which have at most two critical values. Without loss of generality, we choose in this paper to identify the two critical values $0$ and $1.$)

\begin{definition}
We say that two Shabat polynomials $F,G$ are isomorphic if there exist $\alpha,\beta\in\C$ such that $F(z)=G(\alpha z+\beta)$.
\end{definition}

Assume we have a dessin which is a tree and we label the edges with the numbers $1,2,\dots,n.$  We can associate the dessin with a pair of permutations $\sigma_0$, $\sigma_1\in S_n$, where $n$ is number of edges, such that the cycles of $\sigma_0$ correspond to the cyclic ordering read counterclockwise of the edges around the black vertices and the cycles of $\sigma_1$ correspond to the ordering of the edges around the white vertices.  For example, see Figure \ref{treeex}, where we have a bicolored tree, whose edges are labeled $1,2,\dots, 7$ inducing a pair of permutations $\sigma_0,\sigma_1\in S_7$ associated with the black and white vertices, respectively. In general, by $\sigma_{0}$ (respectively, $\sigma_1$), we mean the product of the cycle permutations associated with the edges about all of the black (respectively, white) vertices. The group that $\sigma_0$ and $\sigma_1$ generate is a central focus of this paper.

\begin{figure}
\begin{center}
\includegraphics[width=0.35\textwidth]{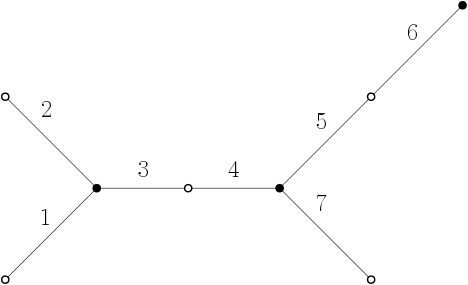} \\
\end{center}
\caption{A dessin determined by the pair of permutations $\sigma_0 = (1,3,2)(4,7,5)$, $\sigma_1 = (3,4)(5,6)$ whose monodromy group $\langle \sigma_0,\sigma_1\rangle$ is isomorphic to $GL_3(\mathbb{F}_2)$, a transitive subgroup of $S_7$.}
\label{treeex}
\end{figure}

\begin{definition}
The {\bf monodromy group} of a dessin is $\langle \sigma_{0} ,\sigma_{1},\sigma_{\infty} \rangle$, where $\sigma_{\infty}$ is such that $\sigma_{0} \sigma_{1} \sigma_{\infty} = 1$. 
\end{definition}

We remark that since $\sigma_{\infty} = (\sigma_{0} \sigma_{1})^{-1}$, we may remove it from the generating set for the monodromy group, but we keep it in the definition to be consistent with the wider literature on this subject, which goes well beyond the consideration of Shabat polynomials. 
For the remainder of the paper, when we refer to the generators of the monodromy group, we are talking about $\sigma_0$ and $\sigma_1$. 
When a dessin is connected, its monodromy group will be a transitive subgroup of $S_n$, where $n$ is the number of edges in the dessin.


To every dessin, we may associate an invariant known as its {\em passport}. While there are several known varieties of passports discussed in the literature, the following definition is sufficient for the purposes of this paper.

\begin{definition}
 The {\bf passport} of a dessin is a pair of lists $[b_1, b_2,\dots, b_k;w_1,w_2,\dots, w_\ell]$ where $b_1, b_2,\dots, b_k$ are the degrees of the black vertices and $w_1,w_2,\dots, w_\ell$ are the degrees of the white vertices.
 \end{definition}
 
 \begin{remark}\label{remark:passportnotation}
 Note that the lists $b_1,b_2,\dots,b_k$ and $w_1,w_2,\dots, w_\ell$ are both partitions of $n$, where $n$ is the number of edges, and these two partitions correspond to the cycle type of $\sigma_0$ and $\sigma_1$, respectively. Within the passport notation, we utilize the commonly known exponential notation to express multiplicity in partitions of $n$.  For example, the expression $r^k$ appearing within the passport notation denotes $k$ vertices of degree $r$.  
 \end{remark}

While each dessin has a unique passport, one may ask how many distinct dessins (or equivalently non-isomorphic Shabat polynomials) are associated with a given passport. The \textbf{size} of a passport is the number of non-isomorphic dessins (equivalently, non-isomorphic Shabat polynomials) corresponding to that passport. Our focus in this paper will be narrowed to passports of size one which by definition admit unique dessins.

\subsection{Composition and Wreath Products}
A dessin with $n$ edges admits a monodromy group contained in the symmetric group $S_n$. We sometimes use the concept of tree composition to decompose a dessin into smaller dessins whose monodromy groups are subgroups of smaller symmetric groups. Composition will also help us compute new Shabat polynomials as it corresponds with the usual polynomial composition. It is an easy exercise in calculus to show that the composition of two Shabat polynomials is again a Shabat polynomial. 

Many of the dessins that we study can be constructed by a composition process given by Adrianov and Zvonkin \cite{adrianov_composition_1998}. 
Given two dessins, $P$ and $Q$, we begin the composition $P\star Q$ by first distinguishing two vertices of $P$ -- label them with a square and a triangle. The vertices of $Q$ will be pre-images of the square and triangle, so we mark every black vertex of $Q$ with a square and similarly every white vertex of $Q$ with a triangle. The process of composition is as follows:
\begin{enumerate}
\item Replace each edge of $Q$ with the union of the path from the square to triangle in $P$ along with every branch connected to that path.
\item Adjoin to each square (resp., triangle) vertex of $Q$, the union of every branch connected to the square (triangle) in $P$ except for the one in the path to the triangle (square). Do this as many times as the degree of the vertex.
\end{enumerate}
The resulting graph should resemble $n$ copies of $P$ arranged in the shape of $Q$, where $n$ is the number of edges of $Q$. We demonstrate this process in Figure \ref{PcircQ}. 
\begin{figure}
\centering
\subfloat[$P$, with two vertices marked square $\square$ and triangle $\triangle$]{\includegraphics[width=0.25\linewidth]{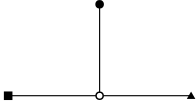}}
\hspace{0.5cm}
\subfloat[$Q$, with black vertices marked $\square$, white vertices marked $\triangle$]{\includegraphics[width=0.25\linewidth]{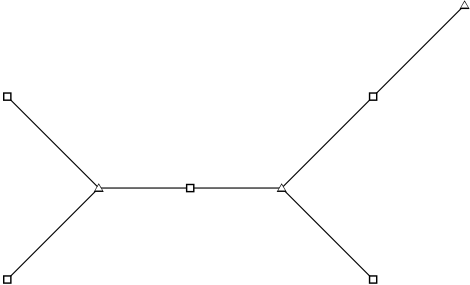}}
\begin{center}
{\includegraphics[width=0.37\textwidth]{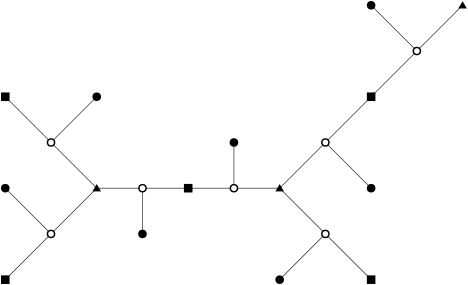}}
\caption{The composition $P \star Q$ of two dessins $P$, $Q$.}
\label{PcircQ}
\end{center}
\end{figure}

\begin{remark}
Let $G_P$, $G_Q$ denote the respective monodromy groups of $P$ and $Q$. According to a theorem of Adrianov and Zvonkin, the monodromy group of $P\star Q$ is a subgroup of $G_Q \wr G_P$, where $\wr$ denotes the wreath product \cite{adrianov_composition_1998}. 
\end{remark}

\noindent This process also gives a way to compute Shabat polynomials. If $p, q$ are the respective Shabat polynomials of $P, Q$ such that $p(0), p(1) \in \{0,1\}$ then the Shabat polynomial of $P \star Q$ is $p \circ q$ (where $\circ$ denotes the conventional composition of functions, i.e., $(f\circ g)(x)=f(g(x))$). Later on, when we compute Shabat polynomials of more complicated dessins, we will make extensive use of this fact.

We will often call upon the idea of the wreath product of groups to describe our monodromy groups. The composition process produces dessins whose monodromy groups as subgroups of wreath products, and while there are numerous examples for which the containment is proper, often equality of the groups is achieved. As far as the present authors can tell, the exact conditions that ensure equality are not known.  
\begin{definition}
Let $G\le S_d$ and $H$ be groups where let $d$ is a positive integer. Let $K$ be the direct product of $d$ copies of $H$. If $h=(h_1,\dots,h_d)\in K$, then we define the action of $\sigma\in G$ on $K$ by $\sigma\cdot h=(h_{\sigma(1)},\dots, h_{\sigma(d)})$. We say that the {\bf wreath product} of $H$ by $G$ is the semi-direct product $K \rtimes G$ with respect to the action above, and we denote this group $H\wr G$. 

\end{definition}

In this paper, $G$ is typically $\mathbb{Z}_d$, the cyclic group of order $d$.


\subsection{Summary of Results}\label{mainresults}
Table \ref{bigtable} below lists the passports which have size one and correspond to dessins which are trees, along with the associated monodromy groups and Shabat polynomials. It contains every such passport of size one, as asserted in \cite{shabat_plane_1994}. In Sections \ref{shabatpolys} and \ref{monodromygroups}, we argue that Table \ref{bigtable} lists the correct Shabat polynomials and monodromy groups. 


\begin{center}
\setlength\extrarowheight{7pt}
\begin{longtable}{|c|C{2.1in}|C{2in}|}
\hline
\label{bigtable}
{\bf Passport } &{\bf Shabat Polynomial} &  {\bf Monodromy Group} \\\hline
$[r;1^r]$ & $z^r$
& $\mathbb{Z}_r$\\\hline

$[2^r,1;2^r,1]$ &   $\displaystyle\frac{1+\cos\left((2r+1)\arccos(z)\right)}{2}$ & $D_{2(2r+1)}$\\\hline

$[2^r;2^{r-1},1^2]$ &  $\displaystyle \frac{1+\cos\left(2r\arccos(z)\right)}{2}$ & $D_{2(2r)}$\\\hline

\pbox{20cm}{\text{        }\newline
\newline
\newline
\newline
$[s^{r-1},t;r, 1^{(r-1)(s-1)+(t-1)}]^{*}$ \\
\newline
\newline
${}^{*}$\text{\footnotesize{for $r > 1$}}} & 
$\displaystyle (1-z)^t \left(\sum_{k=0}^{r-1}{\left(\frac{t}{s}\right)_k \frac{z^k}{k!}}\right)^{s}$  & $\begin{array}{ll} 
      \mathbb{Z}_r \wr \Z_s, \text{ if $s = t$} \\ 
      S_\frac{n}{d} \wr \Z_{d}, \text{ if $r$ even, $(\frac{n}{d},d)=1$} \\
      \widetilde{R}_d, \text{ if $r$ odd, $\frac{t}{d}$ even, $(\frac{n}{d},d)=1$} \\
      A_\frac{n}{d} \wr \Z_{d}, \text{ if $r, \frac{t}{d}$ odd, $(\frac{n}{d},d)=1$} \\
      \text{??,}\hspace{0.5in} \text{otherwise}\\
      \text{\footnotesize{where $n = s(r-1)+t$, $d = (s,t)$}} \\
   \end{array}$ 
   \\\hline

\pbox{20cm}{\text{        }\newline
\newline
\newline
\newline
$[r,t,1^{r+t-2};2^{r+t-1}]^{\ddagger}$ \\
\newline
\newline
${}^{\ddagger}$\text{\footnotesize{for $r,t > 1$}}} & $ 4z^r(1-z)^t \left(\sum_{j=0}^{r-1}{\binom{t-1+j}{t-1}z^j}\right)*\left(\sum_{j=0}^{t-1}{{ \binom{r-1+j}{r-1}}{   \binom{r+t-1}{r+j}    }(-1)^jz^j}\right)$ & $\begin{array}{cl}
	A_{2r-1} \times \Z_2 & r=t, \text{ $r$ odd}\\
      S_{2r-1} \times \Z_2 & r=t, \text{ $r$ even}\\
      A_{r+t-1} \wr \Z_2 & r\not=t, \text{ both odd} \\
      R_2 & r\not=t, \text{ both even} \\
      S_{r+t-1} \wr \Z_2 & r\not=t, \text{ else}
    \end{array}$
    
\\\hline

$[r^2,1^{4r-3};3^{2r-1}]$ & $-3\sqrt{3}\ i\ S_r(z)\left(1-S_r(z)\right) * \left(S_r(z)-\frac{1-i\sqrt{3}}{2}\right)$  & 
$\begin{array}{cl}
A_{2r-1} \wr \mathbb{Z}_3 & r \text{ odd} \\
R_3 & r \text{ even} \\
\end{array}$
\\\hline

$[3^3,1^5;2^7]$ & $ -\frac{4}{531441}(z-1) z^3 \left(2 z^2+3 z+9\right)^3 * \left(8 z^4+28 z^3+126 z^2+189 z+378\right)$ & $A_7\wr \mathbb{Z}_2$ \\\hline
\caption[k=1]{Shabat polynomials and monodromy groups for all passports producing exactly one tree. $D_{2n}$ denotes the dihedral group of order $2n$, and $R_m$ denotes the index 2 subgroup of $S_{\frac{n}{m}} \wr \mathbb{Z}_m$ such that for all $(\tau_1, \dots, \tau_m, g) \in R_m$, $\tau_1 \tau_2\cdots\tau_m$ is an even permutation. Here, $n$ refers to the number of edges of the dessin. The group $\widetilde{R}_d$ denotes the index $2^{d-1}$ subgroup of $S_{\frac{n}{d}} \wr \mathbb{Z}_d$ such that for all $(\tau_1, \dots, \tau_d, g) \in \widetilde{R}_d$, $\sgn(\tau_1)=\sgn(\tau_2)=\dots=\sgn(\tau_d)$. Notational note: $(x,y):=\gcd(x,y)$.}
\end{longtable}
\end{center}

\section{Shabat Polynomials for Passports of Size One}\label{shabatpolys}

In this section, we summarize the list of Shabat polynomials (up to isomorphism) corresponding to dessins which are trees and have passports of size one. The complete list of passports for such dessins was given in \cite{shabat_plane_1994}. For the Shabat polynomials corresponding to these passports, we adopt the convention described in Remark \ref{remark:passportnotation}.

\begin{figure}
\begin{center}
\includegraphics[width=0.25\textwidth]{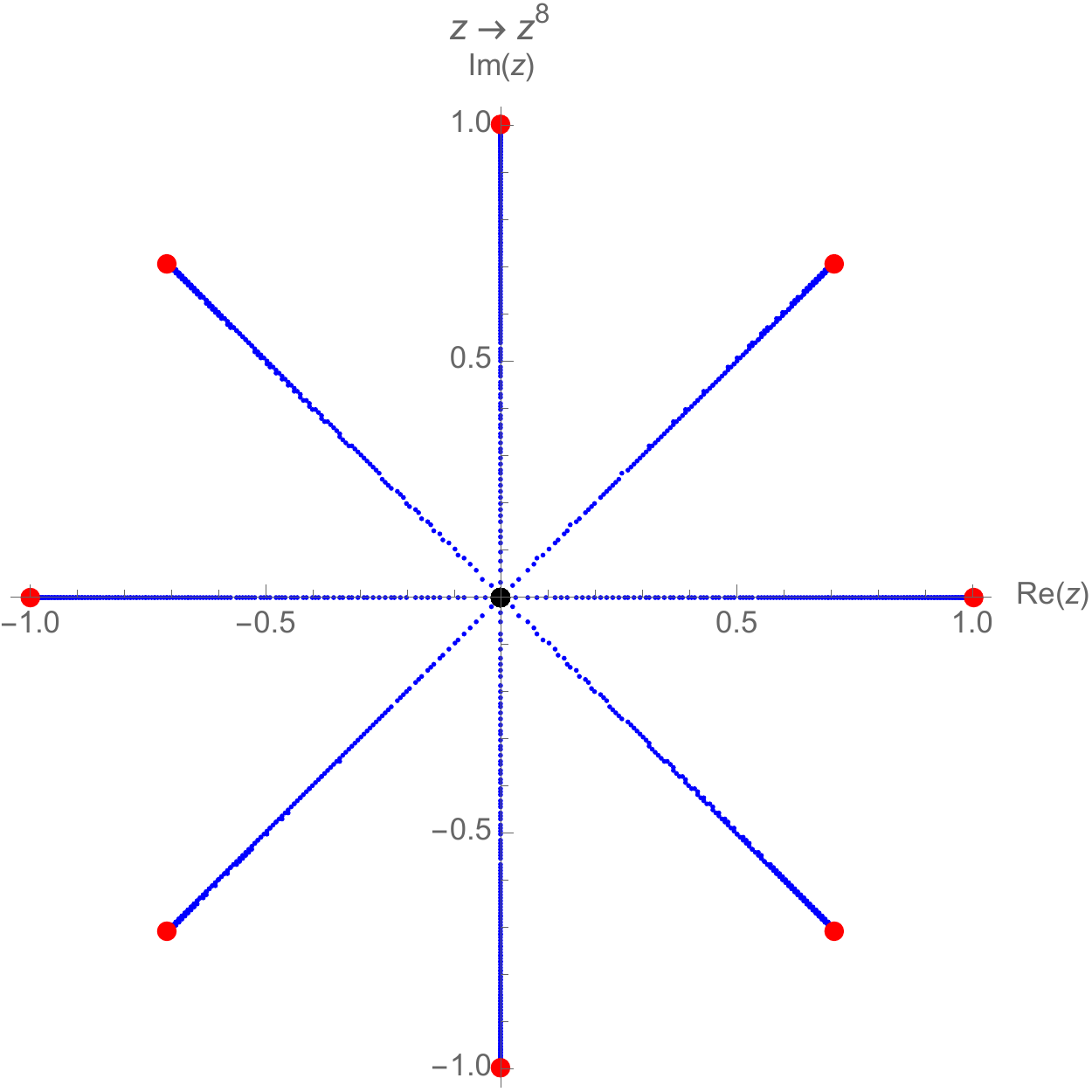} \\
\caption{The dessin with passport $[8;1^8]$.}
\end{center}
\end{figure}

\begin{pro}
The passports $[r;1^r]$, $[2^r, 1; 2^r, 1]$, $[2^r; 2^{r-1}, 1^2]$ have respective Shabat polynomials $z^r, \frac{1}{2}(1+\cos((2r+1)\arccos(z))), \frac{1}{2}(1 + \cos((2r)\arccos(z)))$, all unique up to isomorphism.
\end{pro}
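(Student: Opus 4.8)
The plan is to treat the three passports separately, handling the first by direct computation and the latter two uniformly through the theory of Chebyshev polynomials. In each case it suffices to exhibit a single Shabat polynomial realizing the stated passport: since each passport is assumed to have size one, it admits a unique dessin, and hence a unique Shabat polynomial up to isomorphism, so any valid realization must be the one listed.

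For the passport $[r;1^r]$ I would verify the candidate $F(z)=z^r$ directly. Its derivative $F'(z)=rz^{r-1}$ has the single critical point $z=0$ with critical value $F(0)=0$, so $F$ is a Shabat polynomial. The polynomial $F$ has a single root at $z=0$ of multiplicity $r$, giving one black vertex of degree $r$, while $F-1=z^r-1$ has the $r$ distinct $r$th roots of unity as simple roots, giving $r$ white vertices of degree $1$; this is precisely $[r;1^r]$.

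For the remaining two passports I would recall the Chebyshev polynomial $T_n(z)=\cos(n\arccos z)$, a polynomial of degree $n$ whose derivative satisfies $T_n'=nU_{n-1}$, where $U_{n-1}$ has the $n-1$ simple roots $z=\cos(k\pi/n)$, $k=1,\dots,n-1$, all lying in $(-1,1)$. At these interior critical points $T_n=(-1)^k$, while at the endpoints $T_n(1)=1$ and $T_n(-1)=(-1)^n$, with $T_n'(\pm1)\neq0$. Setting $F=\tfrac12(1+T_n)$, the critical points of $F$ coincide with those of $T_n$, where $F$ takes the values $\tfrac12(1\pm1)\in\{0,1\}$, so $F$ is a Shabat polynomial. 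Because each interior critical point is a nondegenerate extremum (the zeros of $T_n'$ being simple, so $T_n''$ is nonzero there), $F$ or $F-1$ acquires exactly a double root at each such point according to the parity of $k$. Counting these together with the simple endpoint contributions at $z=\pm1$ yields the passport: taking $n=2r+1$ gives $r$ double roots and one simple root for each of $F$ and $F-1$, namely $[2^r,1;2^r,1]$, whereas taking $n=2r$ gives $F$ with $r$ double roots and $F-1$ with $r-1$ double roots and two simple roots, namely $[2^r;2^{r-1},1^2]$.

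The main obstacle will be the careful bookkeeping of multiplicities in the two Chebyshev cases, specifically tracking how the parity of $k$ distributes the interior critical points between roots of $F$ and roots of $F-1$, and separately confirming that the endpoint behavior at $z=\pm1$ differs between the odd-degree and even-degree cases. Once the interior critical points are shown to be simple zeros of $T_n'$, hence nondegenerate and producing genuine double roots with no higher ramification, the total degree count ($2r+1$ or $2r$) closes out each passport, and the size-one hypothesis then delivers uniqueness up to isomorphism.
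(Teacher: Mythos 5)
Your proof is correct, but it is worth noting that the paper does not actually prove this proposition at all: it simply cites pages 3--4 of \cite{shabat_plane_1994}, where these are treated as classical examples (stars and Chebyshev polynomials). Your write-up therefore supplies a self-contained verification where the paper defers to the literature. The content of your argument is the standard one and checks out: $z^r$ has its only critical value at $0$, and for $F=\tfrac12(1+T_n)$ the simple zeros of $T_n'=nU_{n-1}$ at $\cos(k\pi/n)$ force exactly double roots of $F$ or $F-1$ according to the parity of $k$, with the degree count ($n=2r+1$ versus $n=2r$) and the endpoint values $T_n(\pm1)$ closing out the two passports. Two small remarks. First, your uniqueness step leans on the size-one hypothesis, which is itself an imported classification result; for these particular passports uniqueness of the \emph{dessin} is immediate combinatorially (a tree with one vertex of degree $r$ and $r$ leaves is the $r$-star, and a tree with all vertex degrees at most $2$ is a path), after which uniqueness of the Shabat polynomial up to isomorphism follows from the standard correspondence between dessins and Belyi maps --- this makes the argument independent of the classification. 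Second, the phrase ``single critical point $z=0$'' for $z^r$ tacitly assumes $r\ge 2$; for $r=1$ there are no critical points and the statement is vacuous. Neither point is a gap, and what your version buys is that the multiplicity bookkeeping --- exactly why no higher ramification can occur at the interior extrema --- is made explicit rather than left to a reference.
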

\noindent This result is already well-known in the literature and can be found on pages 3-4 of \cite{shabat_plane_1994}.

\begin{pro}[N. Adrianov, \cite{adrianov_2009}]
Up to isomorphism, the unique Shabat polynomial for the passport

$[s^{r-1}, t; r, 1^{(r-1)(s-1)+(t-1)}]$ is 
$$F(z)=(1-z)^t \left(\sum_{k=0}^{r-1}{\left(\frac{t}{s}\right)_k \frac{z^k}{k!}}\right)^s,$$
where $(a)_k=a(a+1)(a+2)\cdots(a+k-1)$ denotes the Pochhammer symbol.
\label{iii}
\end{pro}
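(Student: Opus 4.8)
The plan is to verify directly that the displayed $F$ is a Shabat polynomial realizing the stated passport; since this passport has size one, any Shabat polynomial with that passport is isomorphic to the unique one, so exhibiting $F$ suffices. Write $P(z)=\sum_{k=0}^{r-1}\left(\frac{t}{s}\right)_k\frac{z^k}{k!}$, so that $F(z)=(1-z)^tP(z)^s$, and observe that $P$ is the degree $r-1$ truncation of the binomial series $(1-z)^{-t/s}=\sum_{k\ge 0}\left(\frac{t}{s}\right)_k\frac{z^k}{k!}$, the latter being annihilated by the first-order operator $L[y]:=s(1-z)y'-ty$.

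The computational heart is the monomial identity
$$s(1-z)P'(z)-tP(z)=-\frac{n\left(\frac{t}{s}\right)_{r-1}}{(r-1)!}\,z^{r-1},\qquad n=s(r-1)+t.$$
Because $L$ kills the full series, applying it to the truncation $P$ makes the coefficients of $z^0,\dots,z^{r-2}$ telescope to zero, leaving only the top-degree boundary term; I would obtain the constant $C:=n\left(\frac{t}{s}\right)_{r-1}/(r-1)!$ either from this telescoping or by direct coefficient comparison using the recurrence $\left(\frac{t}{s}\right)_{k+1}=\left(\frac{t}{s}\right)_k\left(\frac{t}{s}+k\right)$. Substituting into $F'(z)=(1-z)^{t-1}P(z)^{s-1}\bigl(s(1-z)P'(z)-tP(z)\bigr)$ then yields the clean factorization
$$F'(z)=-C\,z^{r-1}(1-z)^{t-1}P(z)^{s-1}.$$

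From this factorization everything follows. First, $P$ has $r-1$ simple roots, none equal to $0$ or $1$: a double root $\rho$ would satisfy $P(\rho)=P'(\rho)=0$, whence the identity forces $-C\rho^{r-1}=0$, i.e. $\rho=0$, contradicting $P(0)=1$; moreover $P(0)=1\ne0$ and $P(1)>0$ (a sum of positive terms) exclude $0$ and $1$. Reading the ramification off $F'$, the critical points are $z=0$ with index $r$, $z=1$ with index $t$, and the $r-1$ simple roots of $P$ each with index $s$; their critical values are $F(0)=1$, $F(1)=0$, and $F(\rho)=0$, all in $\{0,1\}$, so $F$ is a Shabat polynomial. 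The zeros of $F$ are $z=1$ (order $t$) and the roots of $P$ (order $s$ each), giving black degrees $s^{r-1},t$; over the value $1$ the only ramified preimage is $z=0$, which is a zero of $F-1$ of order $r$ since $F'$ vanishes to order $r-1$ there, and the remaining $n-r=(r-1)(s-1)+(t-1)$ preimages of $1$ are unramified, giving white degrees $r,1^{(r-1)(s-1)+(t-1)}$.

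I expect the monomial identity of the second paragraph to be the main obstacle: the entire argument rests on recognizing $P$ as a truncated binomial series and checking that $L$ collapses it to a single power of $z$, which is exactly what makes the factorization of $F'$—and hence the whole ramification count—come out cleanly. Once that identity is secured, the simple-root lemma, the critical-value check, and the passport match are all short, and uniqueness up to isomorphism is immediate from the size-one hypothesis.
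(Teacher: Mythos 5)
Your verification is correct, but it takes a genuinely different route from the paper: the paper offers no proof of this proposition at all, deferring entirely to Adrianov's article \cite{adrianov_2009}. Your argument is a self-contained direct check, and its key identity holds: writing $c_k=\left(\tfrac{t}{s}\right)_k/k!$, the recurrence $s(k+1)c_{k+1}=(sk+t)c_k$ makes every coefficient of $s(1-z)P'-tP$ in degrees $0,\dots,r-2$ vanish, leaving exactly $-\tfrac{n}{(r-1)!}\left(\tfrac{t}{s}\right)_{r-1}z^{r-1}$ with $n=s(r-1)+t$, so the factorization $F'(z)=-C\,z^{r-1}(1-z)^{t-1}P(z)^{s-1}$ and the ensuing ramification count (black degrees $s^{r-1},t$ at the roots of $P$ and at $z=1$; white degree $r$ at $z=0$ and $1$'s elsewhere) are all as you state; the simple-root and $P(0)=1$, $P(1)>0$ checks correctly rule out collisions among the critical points. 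What your approach buys is independence from the external reference, at the cost of the bookkeeping above. Two caveats worth making explicit if this were to replace the citation: (i) your uniqueness step still leans on the classification of size-one passports from \cite{shabat_plane_1994}, exactly as the paper does, so the argument is self-contained only for existence, not uniqueness; (ii) one should note that $\deg P=r-1$ exactly (i.e. $\left(\tfrac{t}{s}\right)_{r-1}\neq 0$, which holds since $t/s>0$) so that $P$ really contributes $r-1$ roots and $C\neq 0$.
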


\begin{figure}
 \begin{center}
    \includegraphics[width=1.5in]{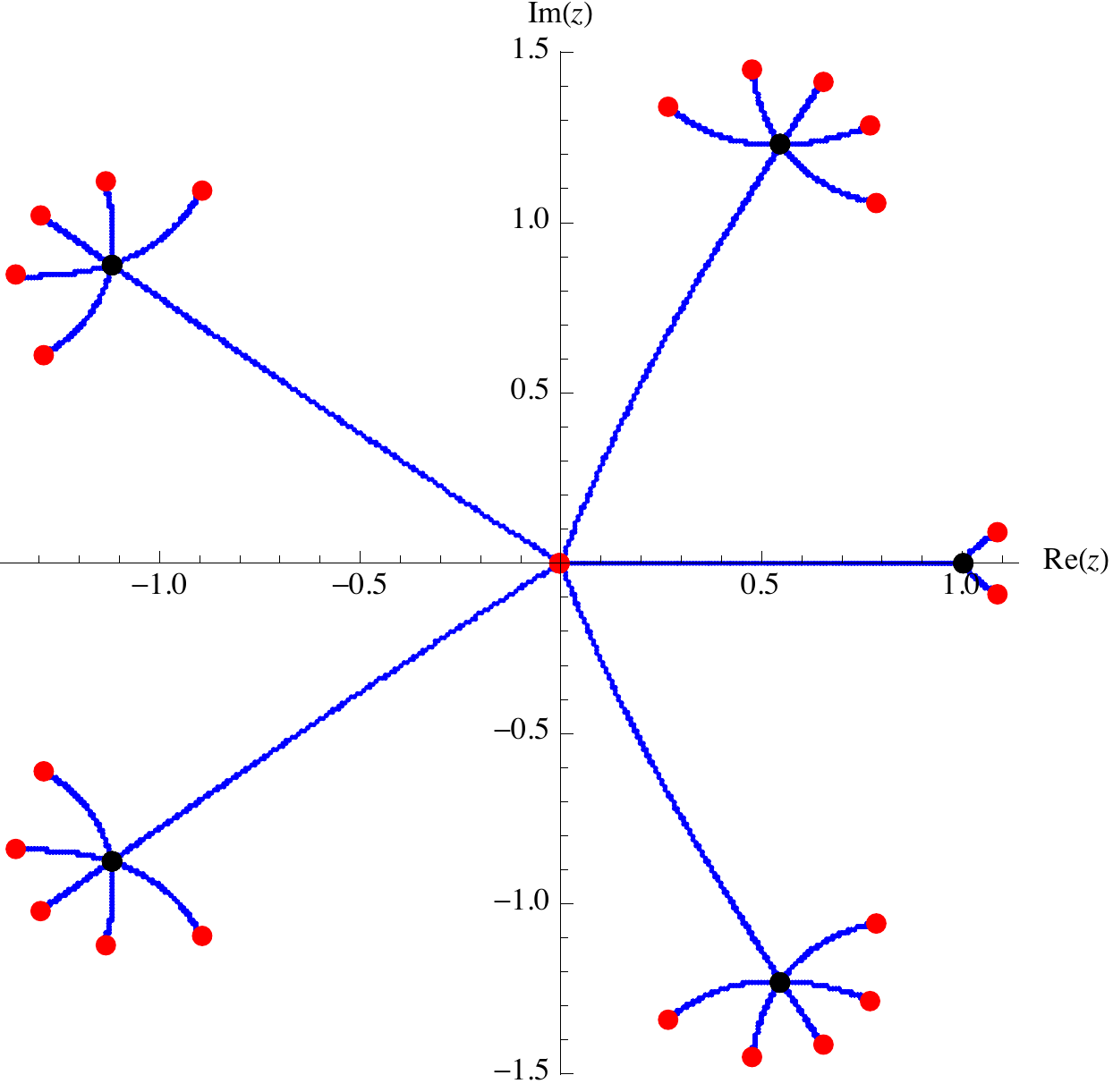}
  \end{center}
  \caption{The dessin obtained by the Shabat polynomial given in Proposition \ref{iii} when $s=6, r=5, t=3.$}
  \label{classiii}
\end{figure}

\noindent The proof for this proposition can be found in \cite{adrianov_2009}. 

\begin{pro}\label{cleansplitstarprop}
Let $r>1.$  Up to isomorphism, the Shabat polynomial for the tree having passport $$[r,t,1^{r+t-2};2^{r+t-1}]$$ with a black vertex of degree $r$ located at $z=0$, a black vertex of degree $t$ located at $z=1$ is given by

$$ F(z)=4z^r \binom{r+t-1}{r}{_2F_1}(t-1,r;r+1;z)\left(1-(1-z)^tz^r\binom{r+t-1}{t-1}{_2F_1}(1,r+t;r+1;z)\right) $$ 
where $_2F_1$ is the hypergeometric function defined by $_2F_1(a,b;c;z)=\displaystyle\sum_{n=0}^{\infty}{\frac{(a)_n(b)_n}{(c)_n}}\frac{z^n}{n!}$.
\end{pro}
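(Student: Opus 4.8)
The plan is to construct the Shabat polynomial directly from the combinatorial data encoded in the passport $[r,t,1^{r+t-2};2^{r+t-1}]$ and then verify that it has the required critical-value behavior. The passport tells me the tree has exactly two black vertices of degree greater than one—one of degree $r$ and one of degree $t$—together with $r+t-2$ black leaves, and that every white vertex has degree $2$ (there are $r+t-1$ of them). A degree-$2$ white vertex corresponds to a simple root of $F-1$ being a double root, i.e.\ $F-1$ has only double roots, so $F-1 = c\,\prod (z-w_i)^2$ for the white vertices $w_i$. Placing the black vertex of degree $r$ at $z=0$ and the one of degree $t$ at $z=1$ forces $F$ to vanish to order $r$ at $0$ and order $t$ at $1$, while the $r+t-2$ simple black vertices are simple roots of $F$. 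This yields the normalization $F(z) = C z^r (1-z)^t \cdot R(z)$, where $R$ collects the simple black roots.

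\emph{First} I would translate the degree and ramification conditions into a differential equation for $F$. The key analytic fact is that at a white vertex of degree $2$, $F$ takes the value $1$ with multiplicity $2$, so $F'$ vanishes there; combined with the black-vertex data, the logarithmic derivative $F'/F$ or equivalently $F'/(F(F-1))$ has prescribed poles and residues. Writing $F' = A\, z^{r-1}(1-z)^{t-1} P(z)$ for some polynomial $P$ (since $F'$ must vanish to order $r-1$ at $0$, order $t-1$ at $1$, and simply at each remaining white vertex and each simple black vertex), and matching this against the derivative of the proposed product form, produces a second-order ODE of hypergeometric type. This is precisely where $_2F_1(t-1,r;r+1;z)$ and $_2F_1(1,r+t;r+1;z)$ should emerge as the natural solutions: the parameters $a,b,c$ are read off from the exponents $r,t$ at the ramification points $0,1$.

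\emph{Next}, after identifying the hypergeometric factors, I would verify that the proposed $F$ has all critical values in $\{0,1\}$. This breaks into checking that every critical point of $F$ maps to $0$ or $1$: the black-vertex critical points map to $0$ (automatic from the factor $z^r(1-z)^t$ and the simple roots bundled into the first $_2F_1$ factor), and the white-vertex critical points map to $1$. The latter is the content of the bracketed factor $\bigl(1 - (1-z)^t z^r \binom{r+t-1}{t-1}{}_2F_1(1,r+t;r+1;z)\bigr)$: I would show this factor, multiplied by $4z^r\binom{r+t-1}{r}{}_2F_1(t-1,r;r+1;z)$, equals $F$, and that $F-1$ is a perfect square up to a constant, confirming all white vertices have degree exactly $2$. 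The counting of degrees (total degree $n=r+t-1$ as a tree has $n$ edges, here the passport gives $n=r+t-1$ white vertices of degree $2$, so $2n$... ) must be reconciled carefully—degree bookkeeping is a necessary consistency check.

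\emph{The hard part} will be proving that the hypergeometric identity makes $F-1$ a genuine perfect square, i.e.\ that the white vertices are \emph{all} of degree $2$ with no spurious higher ramification. This amounts to a nontrivial contiguous-relation or quadratic-transformation identity among the two $_2F_1$ functions; the interplay between ${}_2F_1(t-1,r;r+1;z)$ and ${}_2F_1(1,r+t;r+1;z)$ is not obvious and likely requires either Euler's integral representation, a known quadratic transformation, or an explicit verification that $F'$ divides cleanly. Uniqueness up to isomorphism then follows from the passport having size one, which is given, so I would not re-derive it; the real labor is establishing that this explicit formula realizes that unique tree, which is exactly the ramification verification just described.
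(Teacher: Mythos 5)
Your proposal takes a genuinely different route from the paper, but as written it contains a gap that is not merely cosmetic. The entire content of the proposition is that the displayed polynomial realizes the passport $[r,t,1^{r+t-2};2^{r+t-1}]$, i.e.\ that all critical values lie in $\{0,1\}$ with the prescribed multiplicities; and the one step that carries that content --- showing that $F-1$ is, up to a constant, a perfect square, equivalently that the two ${}_2F_1$ factors satisfy the requisite identity --- is exactly the step you defer, labelling it ``the hard part'' and listing candidate tools (contiguous relations, Euler's integral, quadratic transformations) without executing any of them. Likewise, the claim that the ramification data ``produces a second-order ODE of hypergeometric type'' whose solutions are ${}_2F_1(t-1,r;r+1;z)$ and ${}_2F_1(1,r+t;r+1;z)$ is asserted rather than derived. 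So the proposal is a plausible plan of attack, not a proof: if the hypergeometric identity were carried out it would work, but nothing in the write-up establishes it.

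The difficulty you flag dissolves once one notices the composition structure, which is how the paper proceeds. Let $S_{r,t}$ be the Shabat polynomial for the passport $[t,1^{r-1};r,1^{t-1}]$, already available from Proposition \ref{iii} with $s=1$, namely $S_{r,t}(z)=(1-z)^t\sum_{j=0}^{r-1}\binom{t-1+j}{t-1}z^j$, normalized so that the white vertex of degree $r$ sits at $z=0$ and the black vertex of degree $t$ at $z=1$. Setting $\beta(z)=4z(1-z)$ and $F=\beta\circ S_{r,t}$, one gets $F-1=-\bigl(2S_{r,t}-1\bigr)^2$ for free, so every white vertex of $\Delta_F$ has degree exactly $2$ with no further identity to verify; the composition of Shabat polynomials is Shabat, and at the level of dessins the composition recolors all vertices of $\Delta_{S_{r,t}}$ black and inserts a degree-$2$ white vertex on each of its $r+t-1$ edges, which is precisely the passport in question. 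The hypergeometric expression is then only a rewriting of the two polynomial factors of $4S_{r,t}(1-S_{r,t})$. If you want to salvage your direct approach, you must actually prove the perfect-square identity; otherwise you should replace the ODE scaffolding with the composition argument, which renders it unnecessary.
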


\begin{figure}
 \begin{center}
    \includegraphics[width=2in]{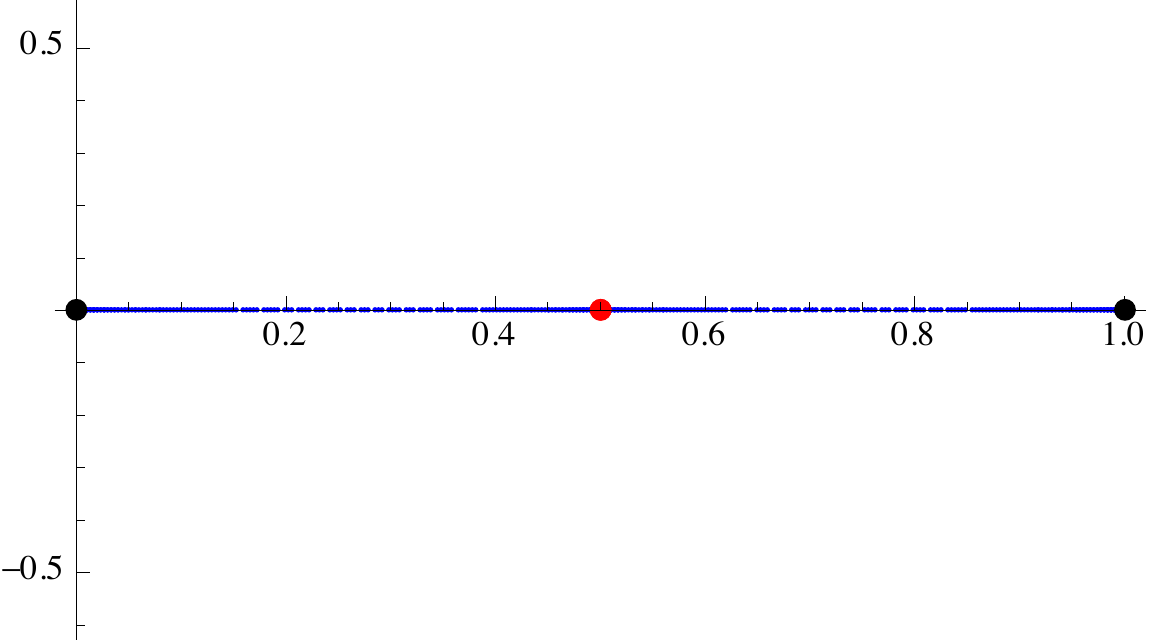}
  \end{center}
  \caption{The dessin (path graph) obtained by the Shabat polynomial $\beta(z)=4z(1-z).$}
    \label{cleanmap}
  \end{figure}
  
    \begin{figure}
 \begin{center}
    \includegraphics[width=1.5in]{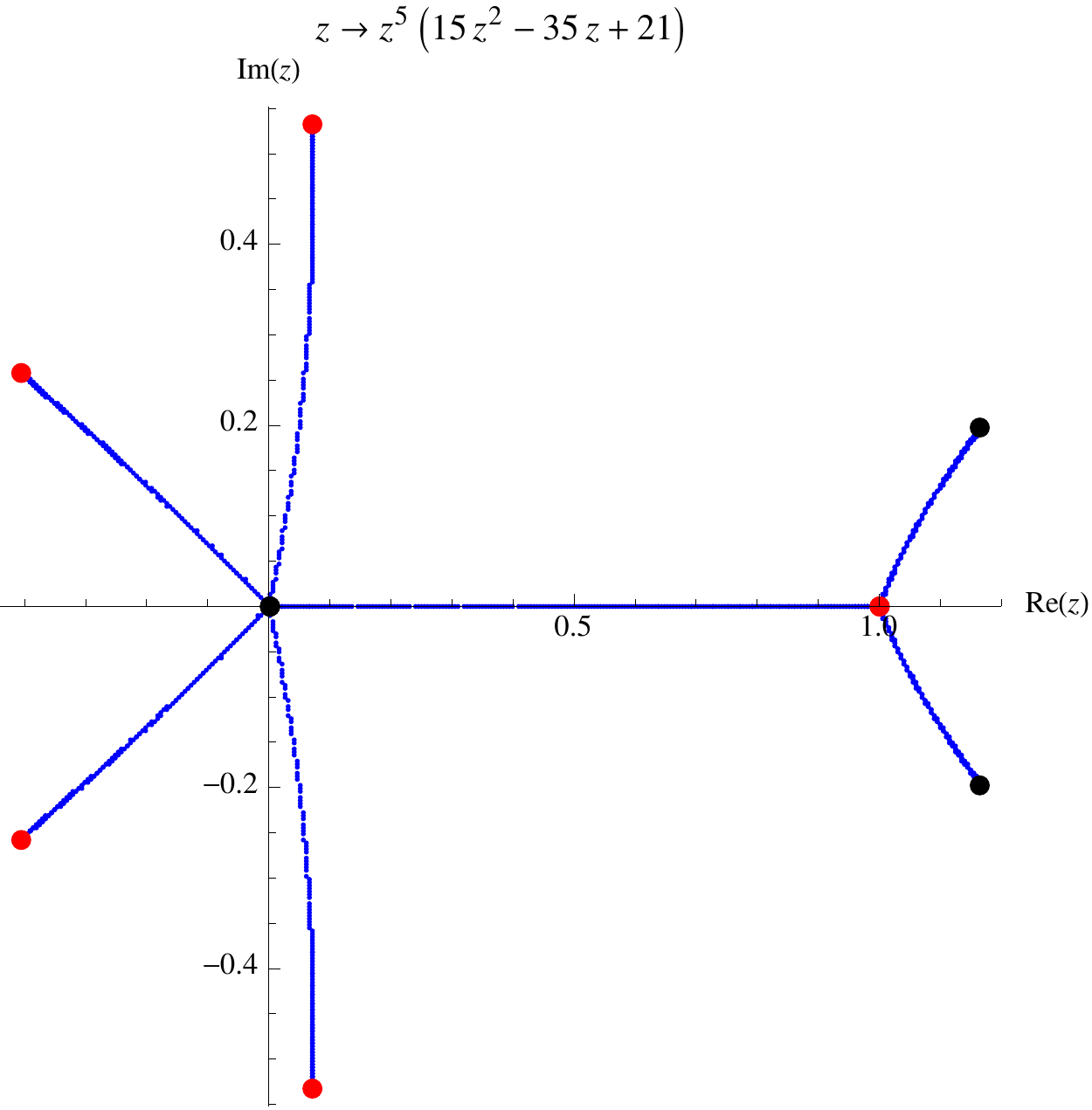}
  \end{center}
  \caption{The tree obtained by the Shabat polynomial in Proposition \ref{iii} where $s=1,r=3, t=5.$}
    \label{splitstar}
  \end{figure}

\begin{proof}
Let $S_{r,t}(z)$ denote the Shabat polynomial for the passport $[t,1^{r-1};r,1^{t-1}]$. By Proposition \ref{iii}, with $s=1$, $$S_{r,t}(z)=(1-z)^t\sum_{j=0}^{r-1}{\binom{t-1+j}{t-1}z^j}.$$

 Consider the map $\beta(z)=4z(1-z)$ with the dessin $\Delta_\beta$ (see Figure \ref{cleanmap}) and $S_{r,t}(z)$ with the dessin $\Delta_S$ (see Figure \ref{splitstar}). The composition $\beta(z)\circ S_{r,t}(z)$ is a Shabat polynomial that  produces the dessin obtained by coloring the vertices of $\Delta_S$ to black and adding a white vertex of degree $2$ inside every edge (in other words, replacing every edge of $\Delta_S$ with $\Delta_\beta$). Note the number of edges in $S_{r,t}(z)$ is $r+t-1$. The composition produces the new dessin $\Delta_F$ (see Figure \ref{cleansplitstar}) and Shabat polynomial $F(z)=\beta(z)\circ S_{r,t}(z)$ with passport $[r,t,1^{r+t-2};2^{r+t-1}]$, and therefore $F(z)$ equals $$4z^r(1-z)^t \left(\sum_{j=0}^{r-1} {\binom{t-1+j}{t-1}z^j}\right)\left(\sum_{j=0}^{t-1}{\binom{r-1+j}{r-1}}\binom{r+t-1}{r+j}(-1)^jz^j\right),$$ which can be rewritten in terms of hypergeometric functions, as in the statement of the present proposition.
\end{proof}

\begin{figure}
 \begin{center}
    \includegraphics[width=1.5in]{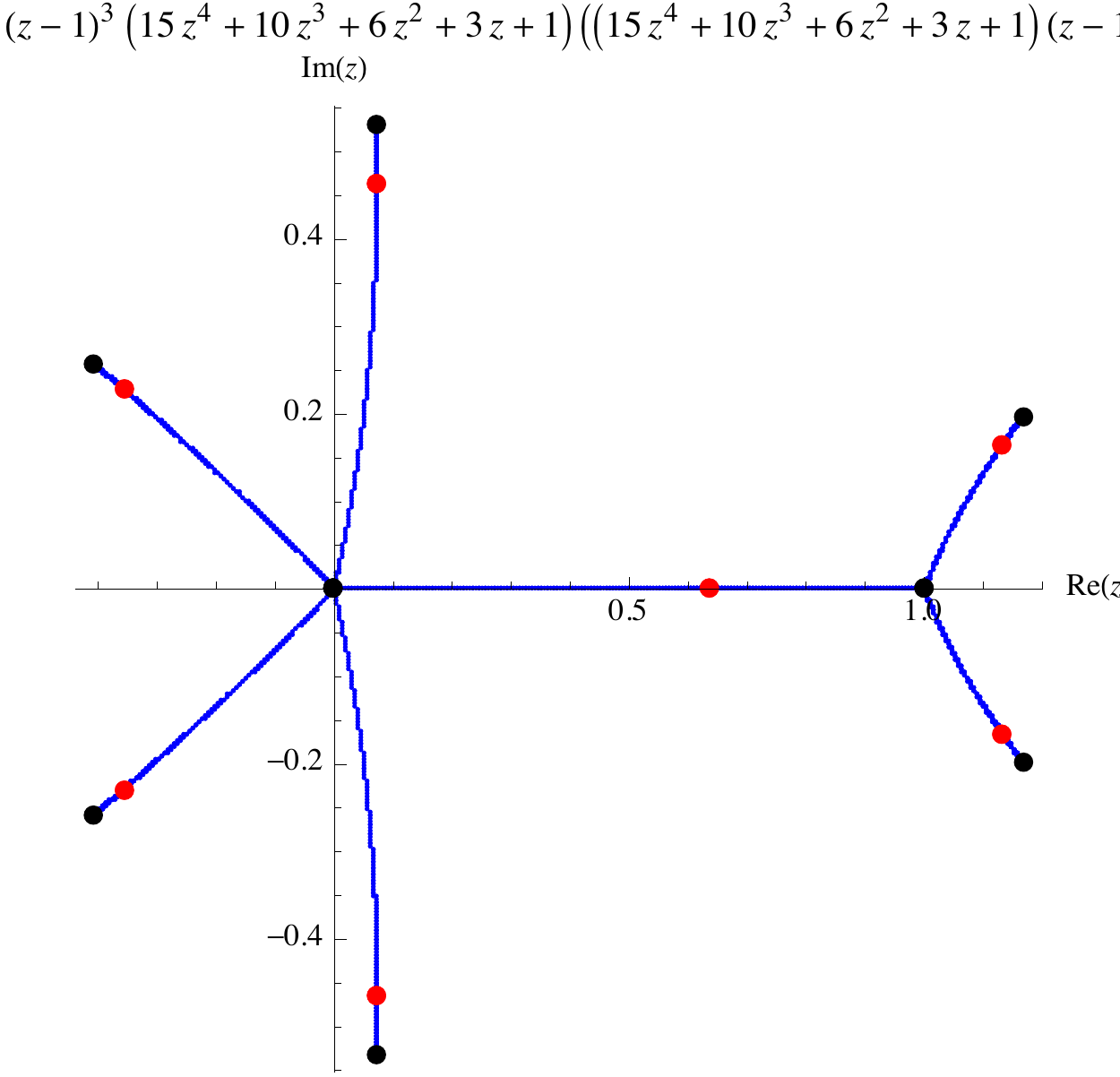}
  \end{center}
  \caption{The tree obtained by the Shabat polynomial in Proposition \ref{cleansplitstarprop} with $r=5, t=3.$}
    \label{cleansplitstar}
  \end{figure}


\begin{pro}
The Shabat polynomial for the unique tree having passport $$[r^2,1^{4r-3};3^{2r-1}]$$ with two black vertices of degree $r$ located at $z=0$ and $z=1$ is given by
$$F(z)=(T\circ S_{r})(z),$$
where $T(z)=-(3/2) \sqrt{3} (i+ \sqrt{3} - 2 iz) (z-1) z$ and $$ S_{r}(z)=(1-z)^r\sum_{j=0}^{r-1}{\binom{r-1+j}{r-1}z^j}$$
$F(z)$ is unique up to isomorphism.
\label{classv}
\end{pro}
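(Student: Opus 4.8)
The plan is to realize $F$ as a composition of two Shabat polynomials and then read off its passport from the multiplicativity of ramification indices, exactly in the spirit of the proof of Proposition~\ref{cleansplitstarprop}. First I would identify the inner factor: taking $s=1$ and $t=r$ in Proposition~\ref{iii} shows that $S_r(z)=(1-z)^r\sum_{j=0}^{r-1}\binom{r-1+j}{r-1}z^j$ is the unique Shabat polynomial for the passport $[r,1^{r-1};r,1^{r-1}]$, a tree with $2r-1$ edges. I would then pin down its two ramified vertices. The factor $(1-z)^r$ gives a black vertex of degree $r$ at $z=1$, where the truncated sum is nonzero (it equals $\binom{2r-1}{r}$ by the hockey-stick identity). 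Meanwhile, the negative binomial expansion $(1-z)^{-r}=\sum_{j\ge 0}\binom{r-1+j}{r-1}z^j$ yields $S_r(z)-1=-(1-z)^r\sum_{j\ge r}\binom{r-1+j}{r-1}z^j$, whose lowest-order term is a nonzero multiple of $z^r$; hence $z=0$ is a white vertex of degree exactly $r$. The remaining $r-1$ black and $r-1$ white vertices are simple, and every value other than $0$ and $1$ is a regular value of $S_r$ with $2r-1$ distinct preimages.

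Next I would verify that $T(z)=-\tfrac{3}{2}\sqrt3\,(i+\sqrt3-2iz)(z-1)z$ is the Shabat polynomial of the claw with passport $[1^3;3]$. Its three roots $0$, $1$, $(1-i\sqrt3)/2$ are simple, giving three black leaves of degree $1$, so the real content is to check that $T-1$ is a perfect cube. Expanding $T'$ shows it is a quadratic whose discriminant vanishes, so $T$ has a single critical point $w=\tfrac12-\tfrac{\sqrt3}{6}i$ of multiplicity two; confirming $T(w)=1$ then gives $T-1=3\sqrt3\,i\,(z-w)^3$, i.e. a single white vertex of degree $3$ lying over $1$, with $w\notin\{0,1\}$. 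This is the step I expect to be the main obstacle, as it is the one genuine computation: one must check both that the critical values of the explicit complex cubic $T$ lie in $\{0,1\}$ and that its roots $0$ and $1$ are positioned to coincide with the special values of $S_r$.

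Finally I would assemble the composition. Since $T$ and $S_r$ are Shabat polynomials with $T(0)=T(1)=0\in\{0,1\}$, the composite $F=T\circ S_r$ is again a Shabat polynomial whose dessin arises from that of $S_r$ by the Adrianov--Zvonkin construction. Reading the passport off the chain rule $e_F(z)=e_{S_r}(z)\cdot e_T(S_r(z))$, the fibre over $0$ decomposes as $S_r^{-1}(0)\cup S_r^{-1}(1)\cup S_r^{-1}((1-i\sqrt3)/2)$, contributing two vertices of degree $r$ (at $z=1$ and $z=0$, from the ramified black and white vertices of $S_r$) together with $(r-1)+(r-1)+(2r-1)=4r-3$ simple black vertices, while the fibre over $1$ is $S_r^{-1}(w)$, giving $2r-1$ white vertices each of degree $3$. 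This is precisely $[r^2,1^{4r-3};3^{2r-1}]$ with the degree-$r$ black vertices at $z=0$ and $z=1$. Since \cite{shabat_plane_1994} asserts that this passport has size one, $F$ is the unique Shabat polynomial for it up to isomorphism, which completes the argument.
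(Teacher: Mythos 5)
Your proposal is correct and follows essentially the same route as the paper's proof: identify $S_r=S_{r,r}$ as the Shabat polynomial of the diameter-three tree with passport $[r,1^{r-1};r,1^{r-1}]$, verify that $T$ is the $3$-star with black leaves at $0$, $1$, $\tfrac{1-i\sqrt3}{2}$ and a white center of degree $3$, and then read the passport of $T\circ S_r$ off the composition. If anything, you are slightly more careful than the paper at the one delicate point — explicitly checking that the double critical point $w$ of $T$ satisfies $T(w)=1$, which the paper only infers indirectly.
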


\begin{figure}
 \begin{center}
    \includegraphics[width=1.5in]{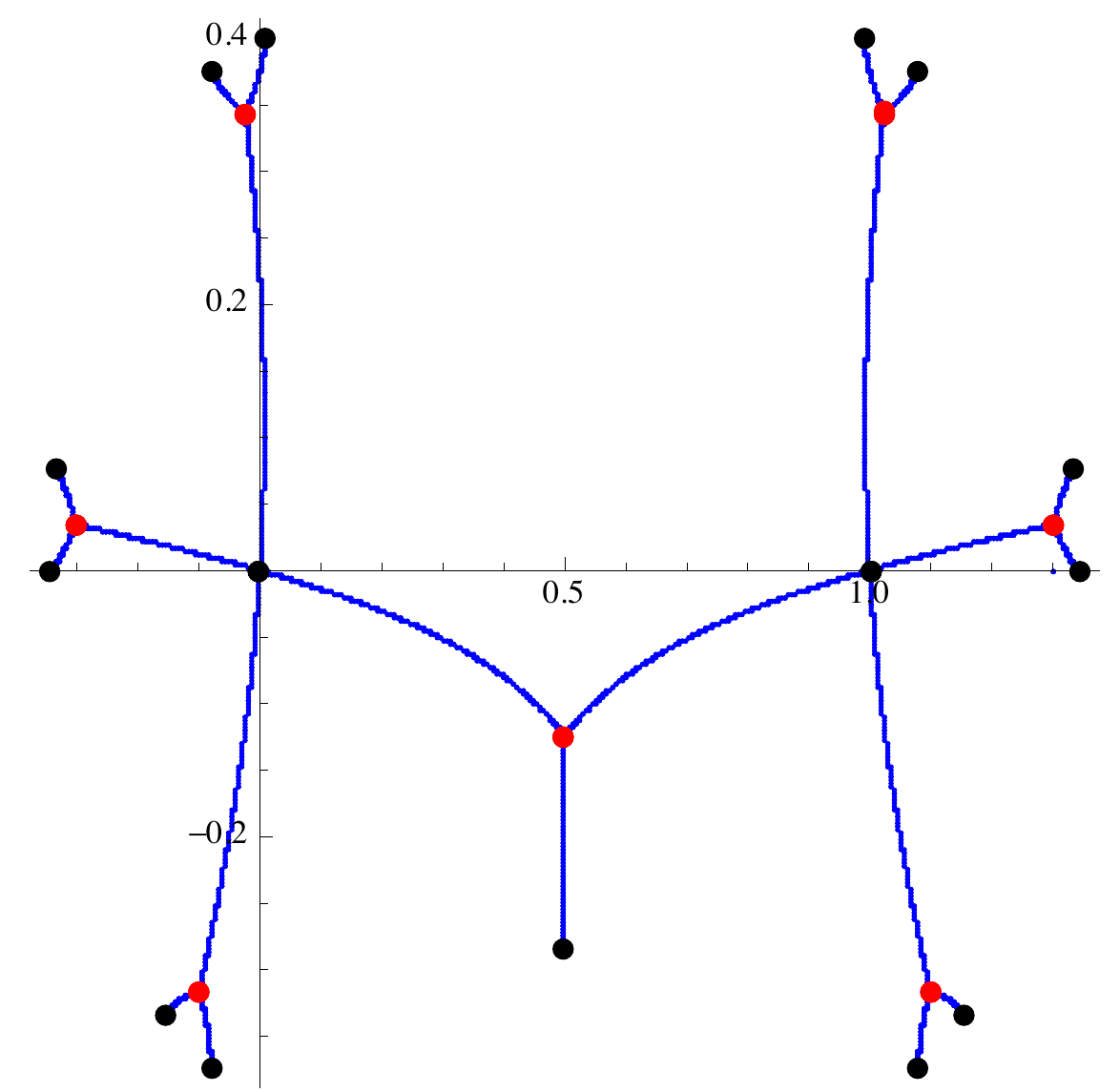}
  \end{center}
  \caption{An illustration of the tree derived from the Shabat polynomial in Proposition \ref{classv} where $r=4.$}
    \label{classvpic}
  \end{figure}

\begin{proof}
First we will show that $T(z):=-(3/2) \sqrt{3} (i+ \sqrt{3} - 2 iz) (z-1) z$ is a $3$-star with a white center and black leaves at $z=0$ and $z=1$. Considering $T(z)$, we see immediately three distinct roots of multiplicity one at $z=0,1,\frac{1-i\sqrt{3}}{2}$ representing three black leaves in $\Delta_F$. Next we consider the derivative of $T(z),$ 

$$T'(z)=9 i \sqrt{3} (1/6 i (3 i + \sqrt{3}) + z)^2$$

\noindent which has a single root of multiplicity $2$. Since the multiplicity of the black vertices is $1$, we may assume that the multiple root in $T'(s)$ must refer to a root of multiplicity $3$ in $F(z)-1,$ representing the white vertex of degree $3$. Therefore, $T(z)$ must be a 3-star with black leaves at $z=1$ and $z=0$. We can now use the idea of composition to replace every edge of the tree having Shabat polynomial $S_{r,r}(z)$ with the 3-star by computing the composition $(T\circ S_{r,r})(z)$ where $S_{r,t}(z)$ is the polynomial defined in the proof of Proposition \ref{cleansplitstarprop}. This will add a white vertex of degree $3$ and an additional black leaf for every edge. Note that $S_{r,r}(z)$ corresponds to a tree with $2r-1$ edges and $4r-2$ vertices. Therefore $\Delta_F$ will have $2r-1$ white vertices of degree $3$ and $4r-3$ black leaves, in addition to the two black vertices of degree $r$.
\end{proof}

\begin{pro} \label{cleancat}
 The Shabat polynomial for the tree with passport $[3^3,1^5;2^7]$, a black vertex of degree three at $z=0$ and a black vertex of degree one at $z=1$ is 
 $$F(z)= -\frac{4}{531441}(z-1) z^3 \left(2 z^2+3 z+9\right)^3 \left(8 z^4+28 z^3+126 z^2+189 z+378\right).$$
\end{pro}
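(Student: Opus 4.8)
The plan is to exploit the composition framework of Proposition~\ref{cleansplitstarprop}. Since the passport $[3^3,1^5;2^7]$ has size one, the corresponding tree and its Shabat polynomial are unique up to isomorphism, so it suffices to exhibit one Shabat polynomial realizing this passport with a degree-three black vertex at $z=0$ and a degree-one black vertex at $z=1$. The key observation is that every white vertex has degree $2$; as in Proposition~\ref{cleansplitstarprop}, this is precisely the signature of a composition $F=\beta\circ S$ with the clean map $\beta(z)=4z(1-z)$, where $S$ is the Shabat polynomial of the $7$-edge tree $\Delta_S$ obtained by deleting each degree-two white vertex and fusing its two incident edges.

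First I would identify $\Delta_S$. Recoloring all vertices of $\Delta_S$ black must reproduce the eight black vertices of $\Delta_F$, so $\Delta_S$ is the tree on $8$ vertices with degree sequence $(3,3,3,1,1,1,1,1)$, namely the caterpillar whose three degree-three vertices form a path with two leaves on each end and one leaf in the middle. Two-coloring this caterpillar forces the middle vertex (degree $3$) and four of the leaves to be one color and the two end vertices (degree $3$) together with the remaining leaf to be the other; thus $\Delta_S$ has passport $[3^2,1;3,1^4]$. This is exactly the family of Proposition~\ref{iii} with $s=3$, $r=3$, $t=1$, which gives
\[
S(z)=(1-z)\left(1+\tfrac{z}{3}+\tfrac{2z^2}{9}\right)^{3}=\frac{(1-z)(2z^2+3z+9)^3}{729}.
\]

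Next I would check the normalization and assemble $F$. A direct expansion shows $S(0)=1$ with $S-1$ vanishing to order $3$ at $z=0$, while $S(1)=0$ to order $1$; under $F=\beta\circ S$ these become a degree-three black vertex at $z=0$ and a degree-one black vertex at $z=1$, as required. By the composition principle preceding Proposition~\ref{cleansplitstarprop}, $F=4S(1-S)$ is a Shabat polynomial whose dessin recolors every vertex of $\Delta_S$ black and inserts a degree-two white vertex into each of its seven edges, so $F$ has passport $[3^3,1^5;2^7]$. Writing $h=2z^2+3z+9$ and noting $531441=729^2$, we obtain $F=\dfrac{4(1-z)h^3\bigl(729-(1-z)h^3\bigr)}{531441}$, so the stated factorization reduces to the single polynomial identity $729-(1-z)h^3=z^3(8z^4+28z^3+126z^2+189z+378)$.

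The main conceptual obstacle is recognizing the composition structure and pinning down the base tree $\Delta_S$ together with its coloring; once $S$ is in hand via Proposition~\ref{iii}, the remaining work is the routine but slightly tedious verification of the degree-$7$ identity above, which follows by expanding $(2z^2+3z+9)^3$ and confirming that the Taylor expansion of $(1-z)h^3$ agrees with $729$ through order $z^2$ (forcing divisibility by $z^3$) and that the resulting quartic quotient has the claimed coefficients.
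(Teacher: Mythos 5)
Your proof is correct and takes essentially the same route as the paper: both recognize the dessin as the "cleaning" composition $\beta\circ f$ with $\beta(z)=4z(1-z)$ and $f$ the Shabat polynomial of the passport $[3^2,1;3,1^4]$ tree obtained from Proposition~\ref{iii} with $r=3$, $s=3$, $t=1$. You simply supply more of the routine verification (identifying the base tree and checking the factorization identity) than the paper's one-line proof does.
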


\begin{figure}
 \begin{center}
    \includegraphics[width=1.5in]{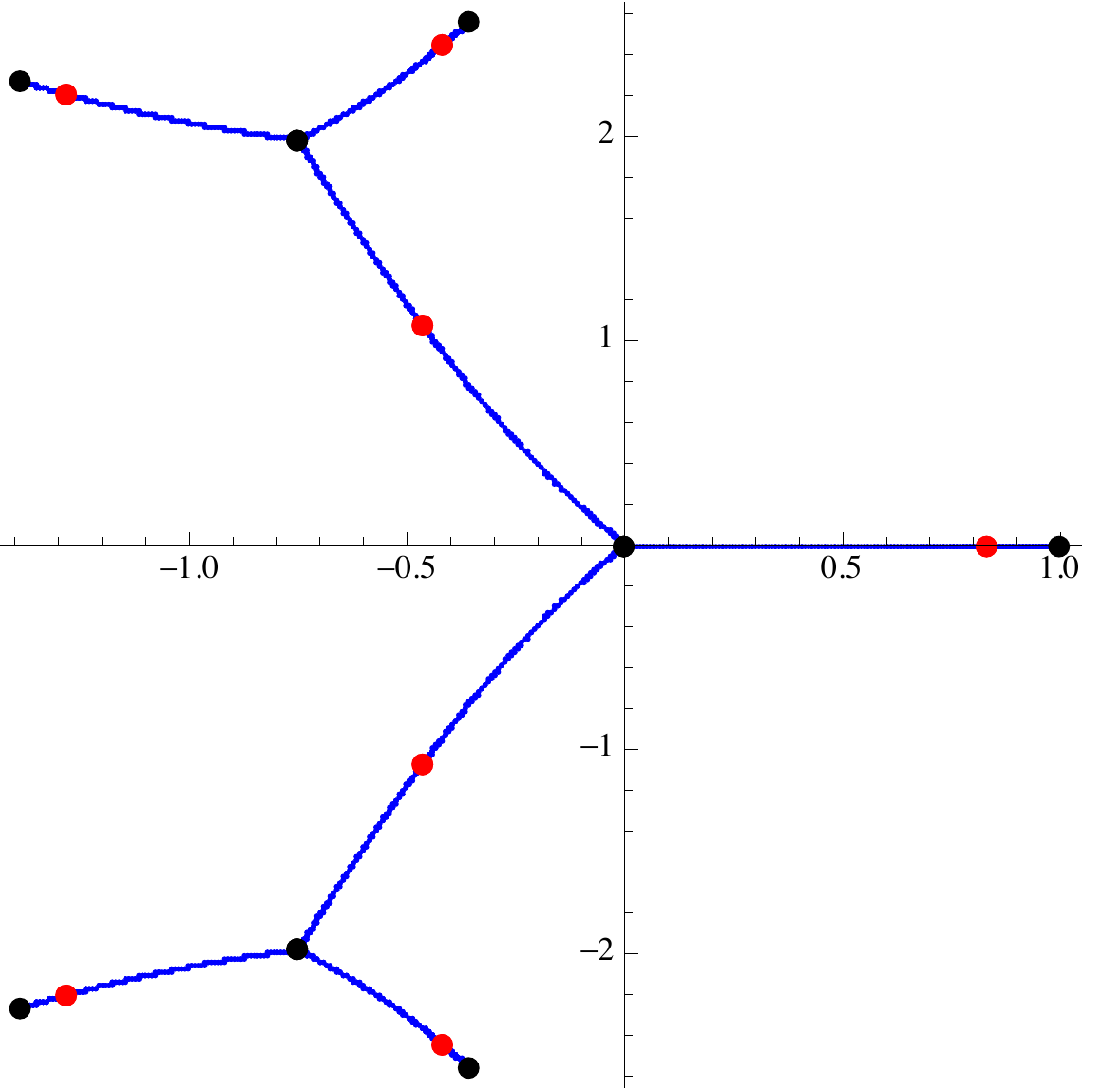}
  \end{center}
  \caption{An illustration of the tree described in Proposition \ref{cleancat}.}
  \label{cleancatfig}
  \end{figure}
  
\begin{proof}
$F(z)=(\beta\circ f)(z)$ where $\beta(z)=4z(1-z)$ and $f(z)= -\frac{1}{729}(z-1)(9+3z+2z^2)^3,$ which is the Shabat polynomial for passport $[3^{2},1;3, 1^4]$ obtained by letting $r=3,s=3,t=1$ in Proposition \ref{iii}.
\end{proof}

\section{Monodromy Groups for Passports of Size One}\label{monodromygroups}

In this section, we provide proofs for the monodromy groups associated with each passport listed in Table \ref{bigtable}. In all of our proofs, we proceed by choosing a particular labeling of the edges of the dessin. Though the monodromy group does not depend on the choice of labels, some choices better illustrate how $\sigma_0$ and $\sigma_1$ generate the monodromy group.

\begin{pro}
The passports $[r;1^r]$, $[2^r, 1; 2^r, 1]$, $[2^r; 2^{r-1}, 1^2]$ have respective monodromy groups $\mathbb{Z}_r$, $D_{2(2r+1)}$, $D_{2(2r)}$.
\end{pro}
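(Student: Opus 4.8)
The plan is to treat each of the three passports separately, since each corresponds to a single explicit tree, and to compute the monodromy group $\langle \s, \ss \rangle$ directly from a convenient edge labeling. For the passport $[r;1^r]$, the dessin is the $r$-star: a single black vertex of degree $r$ with $r$ white leaves. Labeling the $r$ edges $1, 2, \dots, r$ in cyclic (counterclockwise) order about the black center, we get $\s = (1, 2, \dots, r)$ while each white vertex has degree $1$, so $\ss = \id$. Hence $\langle \s, \ss \rangle = \langle (1, 2, \dots, r) \rangle \cong \Z_r$, which is exactly the claimed group.

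For the two dihedral cases, the dessins are paths (the $\arccos$ polynomials are generalized Chebyshev polynomials, whose trees are line segments), so each internal vertex has degree $2$ and the two endpoints have degree $1$. First I would fix the path on $n$ edges and label them $1, 2, \dots, n$ consecutively from one end to the other. Then $\s$ and $\ss$ each become a product of disjoint transpositions of adjacent labels, with the black and white transpositions interleaved along the path. For the passport $[2^r,1;2^r,1]$ the path has $n = 2(2r+1)$ edges, and for $[2^r;2^{r-1},1^2]$ it has $n = 2(2r)$ edges; in both cases I would write $\s = (1,2)(3,4)\cdots$ and $\ss = (2,3)(4,5)\cdots$ (adjusting which endpoints are leaves according to the passport). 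The product $\s\ss$ is then an $n$-cycle, and the key structural observation is that two involutions whose product is an $n$-cycle generate a dihedral group: $\langle \s, \ss \rangle = \langle \s\ss, \s \rangle$ where $\s\ss$ has order $n$ and $\s$ inverts it by conjugation ($\s (\s\ss) \s = \ss\s = (\s\ss)^{-1}$). This yields $D_{2n}$ of order $2n$, matching $D_{2(2r+1)}$ and $D_{2(2r)}$ respectively.

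The steps in order are therefore: (1) identify the explicit dessin for each passport as a star or a path; (2) choose the consecutive edge labeling and read off $\s$ and $\ss$ as products of transpositions; (3) verify that $\s$, $\ss$ are involutions and that $\s\ss$ is a full $n$-cycle; and (4) invoke the fact that two involutions whose product has order $n$ generate the dihedral group $D_{2n}$, so the group is exactly as claimed rather than merely a subgroup of it.

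The main obstacle I anticipate is step (3): one must confirm that $\s\ss$ really is a single $n$-cycle and not a product of smaller cycles, since only then is the group the full dihedral group of the stated order rather than a proper subgroup. This hinges on the precise interleaving of the black and white $2$-cycles along the path, which in turn depends on correctly reading the cyclic orderings at the two leaf endpoints (whether a leaf is black or white shifts where the transpositions start). Getting this parity and boundary bookkeeping right for both the odd case $2r+1$ and the even case $2r$ is the delicate part; once the $n$-cycle is established, the dihedral conclusion is immediate from the standard presentation of $D_{2n}$ by two reflections.
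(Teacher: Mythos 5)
Your approach is essentially the same as the paper's: the star case is handled by noting $\sigma_1=\id$ and $\sigma_0$ is an $r$-cycle, and the path cases by observing that $\sigma_0$ and $\sigma_1$ are involutions whose product is a full cycle that $\sigma_0$ inverts by conjugation, which forces a dihedral group. However, there is a concrete numerical error that makes your final identification internally inconsistent. The passport $[2^r,1;2^r,1]$ has $2r+1$ edges (the black degrees sum to $2r+1$), and $[2^r;2^{r-1},1^2]$ has $2r$ edges; you assert the paths have $2(2r+1)$ and $2(2r)$ edges respectively. As written, your claim that $\sigma_0\sigma_1$ is a full $n$-cycle with $n=2(2r+1)$ is impossible, since the monodromy group lives in $S_{2r+1}$; and your conclusion ``$D_{2n}$ of order $2n$'' with $n=2(2r+1)$ would be a group of order $4(2r+1)$, which does not match the claimed $D_{2(2r+1)}$. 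The source of the confusion is the notation: $D_{2(2r+1)}$ denotes the dihedral group of \emph{order} $2(2r+1)$, i.e., $D_{2n}$ with $n=2r+1$ equal to the number of edges. Once you set $n$ equal to the actual edge count ($2r+1$, resp.\ $2r$), your steps go through exactly as in the paper: $\sigma_0\sigma_1=\sigma_\infty^{-1}$ is a single $n$-cycle (your ``delicate'' interleaving check is correct and worth doing, since the paper merely asserts the order of $\sigma_\infty$), $\sigma_0(\sigma_0\sigma_1)\sigma_0=(\sigma_0\sigma_1)^{-1}$, and the group is dihedral of order $2n$ as claimed.
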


\begin{proof}
The first passport gives the $r$-star dessin with monodromy group generated by the $r$-cycle and the identity permutation.  It follows that the monodromy group is the cyclic group $\mathbb{Z}_r$. The second and third passports yield the path dessins with $2r+1$ and $2r$ edges respectively. We handle these two cases simultaneously, since the argument is essentially the same. The dessins in Figure \ref{pathdessins} are examples of path dessins.

\begin{figure}
\begin{center}
\includegraphics[width=0.4\textwidth]{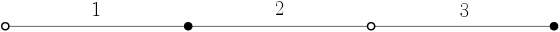} \hspace{0.05\textwidth}
\includegraphics[width=0.4\textwidth]{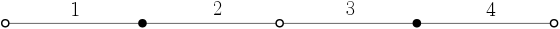}
\end{center}
\caption{The path dessins of 3 and 4 edges, respectively.}
\label{pathdessins}
\end{figure}

 In both cases, the generators of the groups $\sigma_{0}$ and $\sigma_{1}$ have order 2, and the respective $\sigma_{\infty}$'s have order $2r+1$ and $2r$. 
Since in this case $\sigma_{\infty} = (\sigma_{0}\sigma_{1})^{-1} = \sigma_1 \sigma_0$, we may view the monodromy group as $\langle \sigma_{0}$, $\sigma_{\infty} \rangle$. 
We let $n$ denote the order of $\sigma_\infty$; note that $n$ is either $2r +1$ or $2r$ depending on the passport. 
The relations $\sigma_0^{2} = \sigma_\infty^{r} = 1$ and $\sigma_0\sigma_\infty = (\sigma_0 \sigma_1) \sigma_0 = (\sigma_1 \sigma_0)^{-1} \sigma_0 = (\sigma_\infty)^{-1} \sigma_0$ hold. 
The conclusion is that the monodromy groups of these dessins are isomorphic to the dihedral groups of order 2$n$.
\end{proof}


\begin{pro}\label{prop:str}
Assume r > 1. The passport $[s^{r-1},t;r, 1^{(r-1)(s-1)+(t-1)}]$ has $n= (r-1)s +t$ edges and a unique tree with monodromy group $G$, where \[ G \cong \begin{cases} 
      \mathbb{Z}_r \wr \mathbb{Z}_s, & \text{if $s = t$} \\ 
      S_\frac{s(r-1) + t}{d} \wr \mathbb{Z}_d, & \text{if $s\neq t$, $\gcd(s,t) =d$, $\gcd(d,\frac{n}{d})=1$, $r$ even} \\
           \widetilde{R}_d & \text{if $s\neq t$, $\gcd(s,t) =d$, $\gcd(d,\frac{n}{d})=1$, $r$ odd and $\frac{t}{d}$ is even} \\
      
      A_\frac{s(r-1) + t}{d} \wr \mathbb{Z}_d, & \text{if $s\neq t$, $\gcd(s,t) = d$, $\gcd(d,\frac{n}{d})=1$, $r$ is odd and $\frac{t}{d}$ is odd} \\
      ??? & otherwise.
   \end{cases}
\]
\label{iv}
The group $\widetilde{R}_d$ denotes the index $2^{d-1}$ subgroup of $S_{\frac{n}{d}} \wr \mathbb{Z}_d$ such that for all $(\tau_1, \dots, \tau_d, g) \in \widetilde{R}_d$, $\sgn(\tau_1)=\sgn(\tau_2)=\dots=\sgn(\tau_d)$.
\end{pro}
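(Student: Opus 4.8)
The plan is to realize $G=\langle\sigma_0,\sigma_1\rangle$ as a subgroup of a wreath product $S_m\wr\Z_d$ (with $m=n/d$ and $d=\gcd(s,t)$) coming from the single face of the tree, to identify the induced action on one block, and to finish with a parity bookkeeping that separates the four cases. First I would choose a labeling adapted to the geometry. Writing the tree as a central white vertex of degree $r$ whose incident edges are $e_1,\dots,e_r$ (so $\sigma_1=(e_1\,e_2\,\cdots\,e_r)$) with the $i$-th edge running to a black vertex $b_i$ of degree $s$ (for $r-1$ of them) or $t$ (for one of them), the permutation $\sigma_0$ is the product of the corresponding stars. I would then compute the face permutation $c=\sigma_0\sigma_1$ directly: tracing it shows that $c$ is a single $n$-cycle which, starting from $e_1$, lists $e_i$ immediately followed by the $\deg(b_{i+1})-1$ leaves of $b_{i+1}$, so that consecutive central edges $e_i,e_{i+1}$ are $\deg(b_{i+1})$ apart along $c$. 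Since every black degree ($s$ or $t$) is divisible by $d$, all of $e_1,\dots,e_r$ occupy positions $\equiv 0\pmod d$ in the cycle $c$.

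Define $B_a$ to be the set of edges sitting at positions $\equiv a\pmod d$ along $c$; these are $d$ blocks of size $m=n/d$. Because $c$ advances the position by $1$ and $\sigma_1$ fixes every leaf while permuting the $e_i$ within $B_0$, the generator $\sigma_1$ stabilizes each block, whereas $\sigma_0=c\sigma_1^{-1}$ sends $B_a\mapsto B_{a+1}$. Hence $\{B_0,\dots,B_{d-1}\}$ is a block system for $G$, the induced action on blocks is the full cyclic group $\Z_d$ generated by the image of $\sigma_0$, and $G\le S_m\wr\Z_d$. This is the structural backbone, and it uses only $d\mid s$ and $d\mid t$.

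Next I would pin down the base group $N=G\cap S_m^{\,d}$ (the block kernel). Three elements of $N$ carry all the information: $\sigma_1$, whose support lies entirely inside $B_0$, where it acts as an $r$-cycle fixing the remaining $m-r$ points; $c^d$, which restricts on each block to a full $m$-cycle; and $\sigma_0^d$, which restricts on $B_0$ to a product of $r-1$ cycles of length $s/d$ and one cycle of length $t/d$. When $s=t$ (so $d=s=t$ and $m=r$) one has $\sigma_0^d=\mathrm{id}$ and $\sigma_1|_{B_0}$ is a full $m$-cycle, so the within-block group is exactly $\Z_r$, the base is $\Z_r^{\,d}$, and $G\cong\Z_r\wr\Z_s$; this case is unconditional. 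When $s\neq t$ the restriction $\sigma_0^d|_{B_0}$ is nontrivial, and the goal is to show the within-block group $N_0\le\mathrm{Sym}(B_0)$ contains $A_m$. I would argue that $N_0$ is primitive, using $\gcd(s/d,m)=1$ (a consequence of $\gcd(s,t)=d$) so that $c^d|_{B_0}$ together with the cycle $\sigma_1|_{B_0}$ admits no nontrivial block system, and then invoke Jordan's theorem (or the classification of primitive groups containing a full cycle) to force $N_0\supseteq A_m$ from the extra cycle fixing points. This primitivity-plus-fullness step is where I expect the real difficulty to lie, and it is exactly the place where the standing hypothesis $\gcd(d,n/d)=1$ is used to guarantee a uniform conclusion (without it the within-block analysis can break down, which is why the remaining cases are left as ???). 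Once $N_0\supseteq A_m$, simplicity of $A_m$ together with the fact that $\sigma_1$ is supported in the single block $B_0$ gives $N\cap\mathrm{Sym}(B_0)\supseteq A_m$, and conjugating by powers of $\sigma_0$ spreads this across all blocks to yield $N\supseteq A_m^{\,d}$.

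Finally I would read off the exact group from the tuple of component signs, i.e. from $N/A_m^{\,d}\le(\Z/2)^d$. Using a Reidemeister--Schreier generating set for $N$, the relevant sign-patterns come from the conjugates $\sigma_0^{k}\sigma_1\sigma_0^{-k}$ (an $r$-cycle in the single coordinate $k$, of sign $(-1)^{r-1}$) and from $\sigma_0^d$ (the uniform pattern $(\epsilon,\dots,\epsilon)$ with $\epsilon=(-1)^{(r-1)(s/d-1)+(t/d-1)}$). If $r$ is even, the conjugates supply an independent odd permutation in each coordinate, so every sign-pattern occurs and $G\cong S_m\wr\Z_d$. If $r$ is odd, those conjugates are even and only $\sigma_0^d$ contributes; then $\epsilon=(-1)^{t/d-1}$, so when $t/d$ is odd all patterns are even and $G\cong A_m\wr\Z_d$, while when $t/d$ is even the achievable patterns are exactly the two constant tuples, giving $G\cong\widetilde{R}_d$. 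Matching these four outcomes with the block structure of the first two paragraphs completes the identification, and I would check separately the small values of $m$ for which $A_m$ need not be simple, typically against the earlier rows of the table.
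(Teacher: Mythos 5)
Your overall architecture coincides with the paper's: you partition the edges into $d$ blocks by position modulo $d$ along the face cycle $\sigma_0\sigma_1=(1,2,\dots,n)$, embed $G$ in $S_{n/d}\wr\mathbb{Z}_d$, reduce to showing the block kernel contains $A_{n/d}^d$, and finish with exactly the paper's sign bookkeeping in $(\mathbb{Z}/2)^d$ (your generators $\sigma_0^k\sigma_1\sigma_0^{-k}$ and $\sigma_0^d$, with parities $(-1)^{r-1}$ and $(-1)^{(r-1)(s/d-1)+(t/d-1)}$, carry the same information the paper reads off $\sigma_1$, $\sigma_\infty^{d}$ and $\sigma_\infty^{n/d}$). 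The one place you genuinely diverge is the core step, and that is where the gap is. You propose to get $A_m$ (with $m=n/d$) inside the within-block group by proving primitivity and then ``invoking Jordan's theorem.'' As written this does not go through: primitivity is asserted rather than proved (a transitive group containing the full cycle $c^d|_{B_0}$ has candidate block systems given by congruence classes modulo each divisor of $m$, and one must actually check that $\sigma_1|_{B_0}$ or $\sigma_0^d|_{B_0}$ breaks all of them); classical Jordan requires a cycle of \emph{prime} length fixing at least three points, and your $r$-cycle need not have prime length; and the CFSG-based substitute (a primitive group containing an $\ell$-cycle with $1<\ell<m-2$ contains $A_m$) does not apply when $r\ge m-2$, which occurs for example when $s=d$ and $t/d\in\{2,3\}$. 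The paper sidesteps all of this by exhibiting an explicit $3$-cycle $(1,d+1,2d+1)$ as a word in $\sigma_1$ and $\sigma_0^{dk}$, with $k$ chosen by the Chinese Remainder Theorem so that $k\equiv 0\pmod{s/d}$ and $k\equiv-1\pmod{t/d}$, and then using the elementary fact that a $3$-cycle together with a full cycle generates at least $A_m$. To repair your version you would need to carry out the primitivity argument and handle the long-cycle edge cases via the classification of transitive groups containing a full cycle, or simply switch to an explicit $3$-cycle.

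Second, you place the hypothesis $\gcd(d,n/d)=1$ at the wrong step, and the step where it is actually needed is one you gloss over. The within-block analysis only uses $\gcd(s/d,t/d)=1$. The coprimality of $d$ and $n/d$ is what makes $(\sigma_0\sigma_1)^{n/d}$ an element of order $d$ that cyclically permutes all $d$ blocks; this element both transports a copy of $A_m$ from one block to the others and splits the extension $1\to N\to G\to\mathbb{Z}_d\to 1$, so that $G$ really is a wreath product rather than merely some extension of $\mathbb{Z}_d$ by $N$. Your closing sentence (``matching these outcomes \dots completes the identification'') hides this: determining $N$ and its sign patterns does not by itself determine the isomorphism type of $G$, and $\sigma_0$ cannot serve as the complement since $\sigma_0^d\neq\mathrm{id}$ in general. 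Finally, in the $s=t$ case, the claim that the within-block group is exactly $\mathbb{Z}_r$ needs the upper bound $G\le\mathbb{Z}_r\wr\mathbb{Z}_s$ from the Adrianov--Zvonkin composition theorem (or a direct check that $c^d|_{B_0}$ is a power of $\sigma_1|_{B_0}$); the single $r$-cycle $\sigma_1|_{B_0}$ does not by itself rule out additional elements in that block's group.
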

\begin{proof}
The passport $[s^{r-1},t;r, 1^{(r-1)(s-1)+(t-1)}]$ produces a tree of diameter four with $n = (r-1)s +t$ edges in the non-degenerate cases. 
\begin{figure}
\begin{center}
\hspace{15mm} \includegraphics[scale = 0.4]{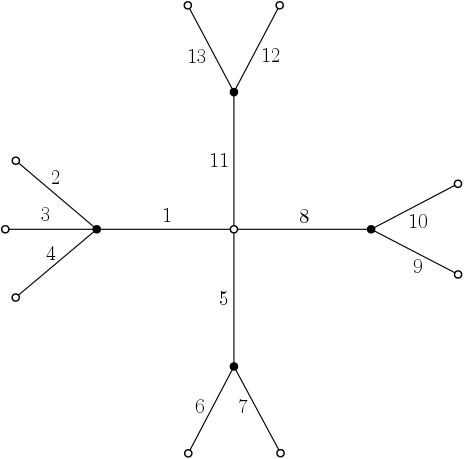}
\newline
\caption{An example of a dessin from Proposition \ref{classiii} where $r=4,s=3,t=4$.}
\end{center}
\end{figure}
\newline
\noindent In general, $\s$ is the product of one $t$-cycle and $(r-1)$ many $s$-cycles and $\sigma_1$ is an $r$-cycle. We label our edges so that we compute the permutations $\s, \ss, \sss$ as
\begin{align*}
\sigma_0 &= (1, \hdots, t)(t+1, \hdots, t+s)(t+s+1, \hdots, t+2s)\hdots(t+(r-2)s+1, \hdots, t+(r-1)s) \\
\sigma_1 &= (1, t+1, t+s+1, t +2s+1, \hdots, t+(r-2)s+1) \\
\sss^{-1} &= \sigma_0\sigma_1 = (1,2,\dots, n)
\end{align*}
(Note that we go left to right when computing permutation products.)
\noindent {\bf Case 1:} $s=t\implies G=\mathbb{Z}_r\wr \mathbb{Z}_s$ \\
Assume $s = t$. Then our dessin is the composition of an $s$-star with an $r$-star, which means $G$ is a subgroup of $\mathbb{Z}_r \wr \mathbb{Z}_t$. Define $\tau_i := \s^{-i} \ss \s^{i}$. Referring to the above where we already computed $\s$ and $\ss$, we see that 
\begin{eqnarray*}
\tau_0 &=& (1,t+1,2t+1, \hdots, (r-1)t + 1)=\sigma_1\\
\tau_1 &=& (2, t+2, 2t+2, \hdots, (r-1)t + 2)\\
& \vdots & \\
\tau_{t-1} &=& (t, 2t, 3t,\hdots, rt)
\end{eqnarray*}
Each $\tau_{i}$ is an $r$-cycle and generates $\mathbb{Z}_r$. Since the $\tau_{i}$'s partition $\{1, 2, \hdots, rt\}$, they must commute with each other and we see that together they generate $\mathbb{Z}_r^t$. Also, $\s$ is a product of $t$-cycles satisfying $\s^{-1} \tau_i \s = \tau_{i+1 }$ where the subscripts are reduced modulo $t$. These relations are sufficient to recognize that $G$ contains $\langle \s, \tau_1, \tau_2, \hdots, \tau_{t-1}\rangle\cong \mathbb{Z}_r \wr \mathbb{Z}_t $.

\vspace{5mm}

\noindent {\bf Case 2:} $s\neq t, \gcd(s,t)=1 \implies G=A_n \text{ for } r,t \text{ odd } $ and $G=S_n$ otherwise \\
Assume that $\gcd(s,t) = 1$, with $s$ or $t > 1$. It is known that a permutation group containing $(1,2,3)$ and $(1,2, \hdots, n)$ contains an isomorphic copy of $A_{n}$; a proof can be seen in \cite{conrad_generating_2013}. Our goal is to show that $A_n \le G \le S_n$ and then use a parity argument to determine which containment is improper. Given that $\sigma_0\sigma_1=(1,2,\dots,n)\in G$ we proceed to show $(1,2,3)\in G$.
\newline
\newline
Assume $t=1$ and $s>1$. We claim $\rho:=(\sigma_0^{-1}\sigma_1^{-1}\sigma_0)(\sigma_\infty\sigma_1\sigma_\infty^{-1})=(1,2,3)$.
Since $t=1$, $\s$ is a product of $(r-1)$ $s$-cycles while $\ss, \ss^{-1}$ remain $r$-cycles. We see that \begin{equation*}\rho=(\sigma_0^{-1} \sigma_1^{-1}\sigma_0)(\sigma_{\infty} \sigma_1 \sigma_{\infty}^{-1})  =  (1,(r-2)s+3,\ldots, 2s+3, s+3,3)(2,3,s+3,2s+3,\ldots,(r-2)s+3)\end{equation*}
One may verify that $\rho(1)=2,\rho(2)=3,\rho(3)=1$ and, for $k>3$, $\rho(k)=k$. It follows that $A_n \leq G$.
\newline
\newline
If $t = 2$, we have $\sigma_0^s=(1,2)\in G$. Since $G$ contains the transposition $(1,2)$ and the cycle $(1,2,\hdots,n)$, then $ S_n \leq G$.
\newline
\newline
Now suppose $t \geq 3$, we first set $k$ to be the smallest positive integer such that $k$ satisfies $k \equiv 0$ (mod $s$) and $k \equiv -1$ (mod $t$). The existence of such a number is guaranteed by the Chinese Remainder Theorem. We claim $\rho:= (\sigma_1^{-1} \sigma_0^k \sigma_1) \sigma_0^k (\sigma_1^{-1} \sigma_0^{-2k} \sigma_1)=(1,2,3) $. 
Notice that
\begin{eqnarray*}
(\sigma_1^{-1} \sigma_0^k \sigma_1) \sigma_0^k (\sigma_1^{-1} \sigma_0^{-2k} \sigma_1) &=& (t+1,t,\hdots, 3,2)(1, t, \hdots,3, 2)(t+1,2,3,\hdots,t)^2.
\end{eqnarray*}

\noindent One may verify that $\rho(1)=2, \rho(2)=3,\rho(3)=1$ and $\rho(k)=k$ for $k>3$. Thus $\rho=(1,2,3) \in G$ and therefore $A_n \subseteq G$.
\newline
\newline
For every triple $s,t$ such that $\gcd(s,t) =1$ and $s$ or $t > 1$, we have shown that $A_{n} \subseteq G$. Since we also have $G\leq S_{n}$, by index considerations $G$ is either the symmetric or alternating group of appropriate order. Otherwise if $r$ or $t$ is even, $\sigma_0$, being the product of a $t$-cycle and $(r-1)$ $s$-cycles, is an odd permutation (note $s$ must be odd if $t$ is even), so $G \cong S_n$. Since both $\sigma_0$ and $\sigma_1$ are even permutations when $r$ and $t$ are odd, we deduce that $G \leq A_n$ and thus the double inclusion gives us $G \cong A_n$.

\vspace{5mm}

\noindent {\bf Case 3:} $s\neq t, \gcd(s,t)=d>1,\gcd(d,\frac{n}{d})=1 \implies G=A_{\frac{n}{d}}\wr \mathbb{Z}_d \text{ for } r,\frac{t}{d} \text{ odd }$, $G=S_{\frac{n}{d}}\wr \mathbb{Z}_d$ for $r$ even, and $\widetilde{R}_d$ for $r$ odd, $\frac{t}{d}$ even.\\

In this final case, we assume $\gcd(s,t) = d > 1$. This tree is the composition $P\circ Q$ where $P$ is the $d$-star and $Q$ is the dessin corresponding to the passport $\left[\left(\frac{s}{d}\right)^{r-1},\frac{t}{d}; r, 1^{(r-1)(s/d-1) + (t/d-1)}\right]$. Hence, the monodromy group $G$ is a subgroup of the wreath product $G_Q \wr \Z_d$, where $G_Q$ is the monodromy group for $Q$. 

 Consider the partition of $\{1, \dots, n\}$ into the $d$ sets $\{1,d+1,\dots, n-d+1\}, \{2,d+2,\dots,n-d+2\}, \dots, \{d,2d,\dots,n\}$, each of size $\frac{n}{d},$ and denote them $P_1, \dots, P_d$ respectively. Recall that $\sigma_0$ is the disjoint product of a $t$-cycle and $(r-1)$ $s$-cycles, and moreover every element in $\{1,2,\dots, n\}$ is moved by exactly one of these cycles under the canonical group action. Because $d$ divides both $s$ and $t$, $\tau := \sigma_0^d$ is the disjoint product of $d$ $\frac{t}{d}$-cycles and $d(r-1)$ $\tfrac{s}{d}$-cycles. Moreover, each disjoint cycle of $\tau$ permutes elements in exactly one of the $P_i$ while fixing the rest. Similarly, because $d$ divides $n$, $\sigma_\infty^d$ is the disjoint product of $d$ $\tfrac{n}{d}$-cycles, and each disjoint cycle of $\sigma_\infty^d$ likewise permutes elements in exactly one of the $P_i$. Note that $\sigma_1$ permutes only the elements of $P_1$.

Let $k$ be the smallest positive integer such that $k$ satisfies $k \equiv 0$ (mod $\frac{s}{d}$) and $k \equiv -1$ (mod $\frac{t}{d}$). One may verify that $\rho := \ss^{-1}\tau^k\ss\tau^k\ss^{-1}\tau^{-2k}\ss =  (1,d+1,2d+1)$. (Note that in the case where $t=d$, we let $\rho:= (\tau^{-1}\sigma_1^{-1}\tau)(\sigma_\infty^d\sigma_1\sigma_\infty^{-d})$ and proceed with the same argument.)


Now assume $\gcd\left(d,\frac{n}{d}\right)=1.$  We can conclude that the subgroup $$N = \langle \rho, \sigma_\infty^{-d}\rho\sigma_\infty^d, \sigma_\infty^{-2d}\rho\sigma_\infty^{2d}, \dots, \sigma_\infty^{-(n-d)}\rho\sigma_\infty^{n-d},\ss \rangle$$ is isomorphic to $S_{\frac{n}{d}}$ when $r$ is even and isomorphic to $A_{\frac{n}{d}}$ when $r$ is odd \cite[Theorem 3.4]{conrad_generating_2013}.  Observe that $\psi := (\sss)^{\frac{n}{d}}$, the product of $\tfrac{n}{d}$ disjoint $d$-cycles, will take elements of $P_i$ to $P_{i+1}$, where the addition is modulo $d$ (with $P_0 = P_d$). Similarly, the $j$th power of $\psi$ will take elements of $P_i$ to $P_{i+j}$, where again the addition is modulo $d$. Thus, conjugating $N$ by $\psi^j$ will produce an isomorphic copy of $N$, which instead acts on the set $P_{1+j}$ which is the relation we expect for the wreath product. 

Thus, we have shown that $G\cong S_{\frac{n}{d}}\wr \Z_d$ when $r$ is even and $G\ge A_{\frac{n}{d}}\wr \Z_d$ when $r$ is odd. Henceforth, assume that $r$ is odd. When $r$ is odd, $\sigma_1$ is an element of $A_{\frac{n}{d}}\wr \Z_d$. Therefore, we must examine $\sigma_0$ or $\sigma_\infty$ to determine $G$.

Recall that $\psi$ is the element of the wreath product which acts on the set $\{P_1,\dots,P_d\}$. That is, $\psi$ takes all the elements of $P_i$ to $P_j$ for some $j$. Thus by multiplying and element $g\in G$ by an appropriate power of $\psi$, we may assume the element $g$ sits inside of $S_{\frac{n}{d}}^d$. Observe that $\widetilde{G}=(S_{\frac{n}{d}}^d)\slash (A_{\frac{n}{d}}^d)$ is isomorphic to $d$ copies of $\Z_2$. Consider $\sigma_\infty^{d}$. One can check that $G=\langle\sigma_1,\sigma_\infty^{d},\sigma_\infty^{\frac{n}{d}} \rangle$. Observe that $\sigma_1=(0,\dots,0)\in\widetilde{G}$ since $\sigma_1\in A_{\frac{n}{d}}^d$. If $\frac{t}{d}$ is odd then $\frac{n}{d}$ is odd since $r-1$ is even. Therefore, $\sigma_\infty^{d},\sigma_\infty^{\frac{n}{d}}\in A_{\frac{n}{d}}^d$ and hence $G\cong A_{\frac{n}{d}}\wr \Z_d$. If $\frac{t}{d}$ is even then $\frac{n}{d}$ is even since $r-1$ is even. Therefore, $\sigma_\infty^{d}$ and $\sigma_\infty^{\frac{n}{d}}$ are isomorphic to $(1,1,\dots,1)$ in the quotient group $\widetilde{G}$ and thus $G=\widetilde{R}_d$.

\end{proof}


\begin{lemma}\label{kcleaning}
Suppose that $\pi_0, \pi_1 \in S_n$ with $\langle \pi_0, \pi_1 \rangle \ge A_n$ with $n\ge 5$.  
\begin{enumerate}
\item If  $|\pi_0| \not = |\pi_1|$, then $\Gamma=\langle (\pi_0,\pi_1),(\pi_1,\pi_0) \rangle$ must contain $A_n\times A_n$.   
\item $\Gamma=\langle (\pi_0,\pi_1, \id), (\id, \pi_0, \pi_1), (\pi_1,\id,\pi_0) \rangle $ must contain $A_n\times A_n \times A_n$.
\end{enumerate}
\end{lemma}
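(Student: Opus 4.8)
The plan is to deduce both statements from a single structural fact about subgroups of $S_n^{\,k}$ that surject onto each coordinate. Since $A_n \le \langle \pi_0,\pi_1\rangle \le S_n$ and $[S_n:A_n]=2$, the group $G:=\langle\pi_0,\pi_1\rangle$ is either $A_n$ or $S_n$; in either case, because $n\ge 5$, every \emph{nontrivial} normal subgroup of $G$ contains $A_n$ (the normal subgroups of $S_n$ are $1, A_n, S_n$, and $A_n$ is simple). In both parts $\Gamma$ is a subdirect product: each coordinate projection of the given generators recovers $\langle\pi_0,\pi_1\rangle=G$. For such a $\Gamma\le G^{k}$ and each coordinate $i$, the coordinate kernel $K_i:=\{x\in G:(\id,\dots,\id,x,\id,\dots,\id)\in\Gamma\}$ (with $x$ in slot $i$) is normal in $G$, because conjugating a slot-$i$ element of $\Gamma$ by any $\gamma\in\Gamma$ acts on that slot by the $i$-th coordinate of $\gamma$, and these range over all of $G$ by subdirectness. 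Hence the whole argument reduces to the following target: for each coordinate $i$, exhibit one nontrivial element of $\Gamma$ supported on slot $i$ alone; this forces $K_i\ne 1$, so $K_i\supseteq A_n$, i.e. the full factor $A_n$ in position $i$ lies in $\Gamma$, and taking all $i$ gives the claim.

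For part (2) I would produce the slot-isolated elements by commutators of the three given generators $a=(\pi_0,\pi_1,\id)$, $b=(\id,\pi_0,\pi_1)$, $c=(\pi_1,\id,\pi_0)$. A direct coordinatewise computation gives $[a,c]=([\pi_0,\pi_1],\id,\id)$, $[b,a]=(\id,[\pi_0,\pi_1],\id)$, and $[c,b]=(\id,\id,[\pi_0,\pi_1])$, because in each commutator two of the three slots involve a factor paired with $\id$ and therefore collapse. The commutator $[\pi_0,\pi_1]$ is nontrivial, since otherwise $\pi_0$ and $\pi_1$ would commute and $G$ would be abelian, contradicting $A_n\le G$ for $n\ge5$. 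Thus each coordinate carries a nontrivial slot-isolated element, the observation of the first paragraph applies to all three coordinates, and $A_n\times A_n\times A_n\subseteq\Gamma$. Note this half uses neither the order hypothesis nor Goursat's lemma.

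For part (1) the analogous commutator $[(\pi_0,\pi_1),(\pi_1,\pi_0)]=([\pi_0,\pi_1],[\pi_0,\pi_1]^{-1})$ is \emph{not} supported on a single slot, which is exactly where the hypothesis $|\pi_0|\ne|\pi_1|$ enters. Relabelling if necessary, assume $|\pi_0|>|\pi_1|$ (the generating set is symmetric under swapping the two slots, so this is harmless). Then raising a generator to a power isolates a slot: $(\pi_0,\pi_1)^{|\pi_1|}=(\pi_0^{\,|\pi_1|},\id)$, and $\pi_0^{\,|\pi_1|}\ne\id$ because $|\pi_0|\nmid|\pi_1|$. This is a nontrivial slot-$1$ element, so $K_1\supseteq A_n$; applying the coordinate-swap automorphism (which preserves $\Gamma$) gives a nontrivial slot-$2$ element as well, so $K_2\supseteq A_n$, and therefore $A_n\times A_n\subseteq\Gamma$.

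The steps are short, so the only real care is in two places. First, one must treat $G=A_n$ and $G=S_n$ uniformly; this is handled once and for all by the remark that any nontrivial normal subgroup of either group contains $A_n$ when $n\ge5$, so I never need to track the parities of $\pi_0,\pi_1$. Second, one must be sure the isolated elements are genuinely nontrivial: in part (2) this is the nonabelianness of $A_n$, and in part (1) it is precisely the order inequality, used through $|\pi_0|\nmid|\pi_1|$. I expect the main (albeit mild) obstacle to be justifying the normality of the coordinate kernels $K_i$ and, in part (1), confirming that the order hypothesis really does force a nonidentity power; these are the points where incorrect bookkeeping of which slots collapse could creep in. As an alternative to the power trick in part (1), Goursat's lemma also closes the case: a subdirect $\Gamma\le G\times G$ with both coordinate kernels trivial would be the graph of an automorphism of $G$ swapping $\pi_0$ and $\pi_1$, which is impossible since automorphisms preserve element orders.
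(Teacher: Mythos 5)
Your argument is correct, and its skeleton matches the paper's: in both cases one produces a nontrivial element of $\Gamma$ supported on a single slot and then uses simplicity of $A_n$ (for $n\ge 5$) to inflate it to the whole factor. Your phrasing via coordinate kernels $K_i$ being normal in $G=\langle\pi_0,\pi_1\rangle$ is just a tidier packaging of the paper's step of conjugating $(\rho,\id)$ by elements $(\tau,\tau')$ whose first coordinates sweep out $A_n$. For part (1) your slot-isolating device, the power $(\pi_0,\pi_1)^{|\pi_1|}=(\pi_0^{|\pi_1|},\id)$ with $\pi_0^{|\pi_1|}\ne\id$ because $|\pi_0|>|\pi_1|$, is exactly the paper's. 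Where you genuinely diverge is part (2): the paper splits into cases on whether $|\pi_0|=|\pi_1|$ and, in the equal-order case, invokes a prime-counting fact to find $\pi\in A_n$ with $|\pi|\nmid r$ and then claims an element of the form $(\pi_0^{k_1},\pi,\pi_1^{k_2})$ whose $r$-th power isolates the middle slot; your commutators $[a,c]=([\pi_0,\pi_1],\id,\id)$, $[b,a]=(\id,[\pi_0,\pi_1],\id)$, $[c,b]=(\id,\id,[\pi_0,\pi_1])$ isolate all three slots at once, with nontriviality of $[\pi_0,\pi_1]$ coming for free from nonabelianness of $A_n$. This buys you a uniform, case-free proof of part (2) that needs no hypothesis on the orders and sidesteps the least precise step of the paper's argument (the exact form of the element $(\pi_0^{k_1},\pi,\pi_1^{k_2})$, whose outer coordinates are really only arbitrary words in $\pi_0,\pi_1$ rather than powers); the paper's order-based route, for its part, is the one that generalizes directly from part (1). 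Your closing remark that Goursat's lemma plus order-preservation of automorphisms could replace the power trick in part (1) is also sound, though unnecessary.
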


\begin{proof}
Suppose that $\id\neq\rho \in A_n$. Observe that $\langle \tau^{-1}\rho\tau: \tau\in A_n\rangle$ is a normal subgroup of $A_n.$  If $n\geq 5$, $A_n$ is simple and therefore, $A_n=\langle \tau^{-1}\rho\tau: \tau\in A_n\rangle$.  

First, we consider Statement 1. Suppose that $(\rho,\id)\in \Gamma$.  We want to show that $A_n\times \langle \id \rangle$ is a subgroup of $\Gamma$.  There is a homomorphism $ \mathrm{proj}: S_n\times S_n \to S_n$, which is a projection from the first component.  Since $A_n \le \langle \pi_0,\pi_1\rangle$, the image of $\mathrm{proj}(\Gamma) \ge A_n.$  Therefore, for all $\tau\in A_n$ there exists $\tau'\in S_n$ such that $(\tau, \tau')\in \Gamma$. Conjugating $(\rho,\id)$ by all $(\tau, \tau')$ shows that $A_n\times \langle \id \rangle\le \Gamma$.  Note that the same argument can be used to show $\langle \id \rangle \times A_n\le \Gamma$ via projection in the other component.  Statement 1 then follows as long as $\rho\neq \id$ exists.  Furthermore, the argument to establish statement 2 would proceed in an identical fashion, presuming $\rho\neq \id$ exists.

To establish existence of $\rho$ in the case of Statement 1, we claim that there exists an element of the form $(\rho,\id) \in \Gamma$ where $\rho \not = \id$. Without loss of generality, assume $|\pi_0| > |\pi_1| $, and then consider $(\pi_0,\pi_1)^{|\pi_1|}, (\pi_1,\pi_0)^{|\pi_1|}$, in which case we may let $\rho=\pi_0^{|\pi_1|}$.

Now we prove such an element exists in the case of statement 2 for $n>2$. If $|\sigma_0|\ne |\sigma_1|$, then the proof is analagous to the argument for statement 1. Otherwise $|\sigma_0| = |\sigma_1|=r$ and we want to find some element $\pi \in A_n$ such that $|\pi| \nmid r$. One can show that such a $\pi$ exists by proving that, for $n>2$, there must be some prime $q$ \emph{not} dividing $|\pi_0|=r$. One can show $q$ exists by using the fact that
$$
n<\displaystyle\sum_{\substack{p\le n \\ p \textup{ prime}}}{p}
$$
for $n>2$. Using all three generators of $\Gamma$, one can produce the element $(\pi_0^{k_1},\pi,\pi_1^{k_2})\in A_n^3$ where $k_1,k_2\in\mathbb{Z}$. By raising this element to the $r^{th}$ power, we produce the element $(\id,\pi^r,\id)\in \Gamma$ and let $\rho=\pi^r$.
\end{proof}

\begin{corollary}
Let $H$ be a simple group. Suppose that $\pi_0, \pi_1 \in S_n$ with $\langle \pi_0, \pi_1 \rangle \ge H$.  
\begin{enumerate}
\item If  $|\pi_0| \not = |\pi_1|$, then $\Gamma=\langle (\pi_0,\pi_1),(\pi_1,\pi_0) \rangle$ must contain $H\times H$.   
\item $\Gamma=\langle (\pi_0,\pi_1, \id), (\id, \pi_0, \pi_1), (\pi_1,\id,\pi_0) \rangle $ must contain $H\times H \times H$.
\end{enumerate}
\end{corollary}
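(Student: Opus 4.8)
The plan is to re-run the proof of Lemma \ref{kcleaning} essentially verbatim, observing that the only structural property of $A_n$ it exploited was simplicity (valid for $n\ge 5$), which is now furnished directly by the hypothesis that $H$ is simple. Concretely, the heart of the lemma is the following mechanism: once one exhibits a nontrivial element of the form $(\rho,\id)$ (respectively $(\id,\rho,\id)$) lying in $\Gamma$ with $\rho\in H$, the coordinate projection $\mathrm{proj}\colon\Gamma\to H$ is surjective onto $H$ (since $\mathrm{proj}(\Gamma)\supseteq\langle\pi_0,\pi_1\rangle\supseteq H$), so conjugating $(\rho,\id)$ by elements $(\tau,\tau')\in\Gamma$ with $\tau$ ranging over $H$ realizes the entire normal closure of $\rho$ in $H$ in the first coordinate. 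Simplicity of $H$ makes this normal closure all of $H$, giving $H\times\langle\id\rangle\le\Gamma$; repeating in the remaining coordinate(s) and bootstrapping (e.g.\ multiplying a generator by the $H$ just produced to clear a coordinate) then yields $H\times H$ and $H\times H\times H$ respectively. I would write this portion by pointing to the corresponding paragraph of Lemma \ref{kcleaning} with $A_n$ uniformly replaced by $H$.

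What must be re-examined is the construction of the seed element, since the lemma obtained it using features peculiar to symmetric groups. For Statement 1 the power trick transfers unchanged: assuming $|\pi_0|>|\pi_1|$, the element $(\pi_0,\pi_1)^{|\pi_1|}=(\pi_0^{|\pi_1|},\id)$ is nontrivial, so I would set $\rho:=\pi_0^{|\pi_1|}$. For Statement 2 the case $|\pi_0|\ne|\pi_1|$ is analogous; the genuinely new case is $|\pi_0|=|\pi_1|=r$, where the lemma used the inequality $n<\sum_{p\le n}p$ to locate a prime not dividing $r$ and hence an element of $A_n$ of order not dividing $r$. In the abstract setting I would replace this step by the assertion that a non-abelian simple group $H$ contains an element $\pi$ with $|\pi|\nmid r$: because $|H|$ is divisible by at least three primes and no single element of $H$ attains the exponent of $H$, the order $r$ of one fixed element cannot be a multiple of every element order. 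Raising the resulting element $(\pi_0^{k_1},\pi,\pi_1^{k_2})\in H^3$ to the $r$-th power then produces $(\id,\pi^r,\id)$ with $\pi^r\ne\id$, exactly as in the lemma.

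The main obstacle, and the point I would be most careful about, is guaranteeing that the seed $\rho$ genuinely lies in $H$ — equivalently, that its normal closure recovers $H$ rather than a smaller or skew subgroup. In the lemma this was automatic because $A_n\trianglelefteq S_n$ has trivial centralizer, so even a seed $\rho\notin A_n$ has normal closure containing $A_n$; for a general simple $H\le S_n$ this requires knowing that $\langle\pi_0,\pi_1\rangle$ normalizes $H$ and that $H$ is not centralized by the normal closure of $\rho$. If $\rho\in H$ the conclusion is immediate from simplicity; otherwise I would intersect the normal closure $N$ of $\rho$ with $H$, note that $N\cap H\trianglelefteq H$ is either $\langle\id\rangle$ or $H$, and rule out the trivial case by a centralizer argument that mirrors the parity step of Lemma \ref{kcleaning}. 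Once this is secured, both statements follow formally.
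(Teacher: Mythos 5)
The paper gives no separate proof of this corollary --- it is stated immediately after Lemma \ref{kcleaning} with the evident intention that the lemma's argument carries over with $A_n$ replaced by $H$, and that is exactly the route you take, so in spirit your proposal matches the paper. You also deserve credit for isolating the two places where the transfer is not automatic. However, both of those places are left as assertions rather than arguments, and one of them hides a real problem. First, your substitute for the prime-sum trick --- ``no single element of a non-abelian simple group attains the exponent'' --- is true but is itself a nontrivial fact that you neither prove nor cite; note also that the corollary as stated allows $H$ to be abelian simple, and for $H\cong\mathbb{Z}_2$ with $\pi_0=\pi_1=\sigma$ of order $2$ statement 2 is actually \emph{false}, since $(\sigma,\sigma,\id)(\id,\sigma,\sigma)=(\sigma,\id,\sigma)$ makes $\Gamma\cong\mathbb{Z}_2\times\mathbb{Z}_2$, which does not contain $H^3$. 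So the non-abelian hypothesis you quietly introduce is not cosmetic: it is needed, and the existence of $\pi\in H$ with $|\pi|\nmid r$ still requires a proof.

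Second, your treatment of the case $\rho\notin H$ is not yet an argument. The subgroup $K=\langle\tau^{-1}\rho\tau:\tau\in H\rangle$ is normalized by $H$, so $K\cap H\trianglelefteq H$ is trivial or all of $H$; but ruling out $K\cap H=\langle\id\rangle$ is where the work lies, and ``a centralizer argument that mirrors the parity step'' does not obviously exist for an arbitrary simple $H\le S_n$: from $K\cap H=1$ and $H$ normalizing $K$ you only get $[H,K]\le K$, and without $K$ normalizing $H$ you cannot conclude that $K$ centralizes $H$. In the lemma this is invisible because $C_{S_n}(A_n)=1$ and every subgroup normalized by $A_n$ that meets $A_n$ trivially must centralize it; for a general simple $H\le S_n$ no such statement is available. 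The clean fix is to prove the corollary under the hypothesis that is actually used in the paper's application (Remark following the corollary), namely $\langle\pi_0,\pi_1\rangle=H$, so that every seed element built from $\pi_0,\pi_1$ automatically lies in $H$ and simplicity finishes the argument; as stated for $\langle\pi_0,\pi_1\rangle\ge H$, your proof has a gap.
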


\begin{remark}
In \cite{adrianov_primitive_1997}, Adrianov, Kochetkov, and Suvorov classify all the possible primitive, and thus simple, monodromy groups of plane trees.
\end{remark}

\begin{pro}
Let $r,t>1.$  The passport $[r,t,1^{r+t-2};2^{r+t-1}]$ produces a unique tree with monodromy group $G$, where \[ G \cong \begin{cases} 
      A_{2r-1} \times \mathbb{Z}_2 & r=t, r \text{ odd}\\
      S_{2r-1} \times \mathbb{Z}_2 & r=t, r \text{ even}\\
      A_{r+t-1} \wr \mathbb{Z}_2 & r\not=t, \text{ both odd} \\
      R_2 & r\not=t, \text{ both even} \\
      S_{r+t-1} \wr \mathbb{Z}_2 & r\not=t, \text{ else} \\

   \end{cases}
\]
where $R_2$ denotes the index $2$ subgroup of $S_{r+t-1} \wr \mathbb{Z}_2$ containing $A_{r+t-1} \wr \mathbb{Z}_2$ and an element $(\zeta_1,\zeta_2,0)$ where $\sgn(\zeta_1)=\sgn(\zeta_2)=-1$.
\end{pro}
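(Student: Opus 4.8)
The plan is to realize $\Delta_F$ as the composition $\Delta_\beta \star Q$ from Proposition~\ref{cleansplitstarprop}, where $Q := \Delta_{S_{r,t}}$ has $m := r+t-1$ edges and $\Delta_\beta$ is the two-edge path with monodromy group $\Z_2$, and then to compute $G$ as an explicit subgroup of $G_Q \wr \Z_2$. Since $G_{\Delta_\beta}\cong\Z_2$, the Adrianov--Zvonkin containment \cite{adrianov_composition_1998} gives $G \le G_Q \wr \Z_2 = G_Q^2 \rtimes \Z_2$. Writing $\rho_0,\rho_1$ for the monodromy generators of $Q$, the $s=1$, $\gcd(s,t)=1$ instance of Proposition~\ref{prop:str} gives $G_Q = \langle \rho_0,\rho_1\rangle$ with $\rho_0$ a single $t$-cycle, $\rho_1$ a single $r$-cycle, and $G_Q = A_m$ when $r,t$ are both odd, $G_Q = S_m$ otherwise. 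First I would label the half-edges of $\Delta_F$ so that the two blocks of size $m$ in $G_Q^2\rtimes\Z_2$ record, respectively, the ends of the subdivided edges of $Q$ nearest its black and its white vertices. Reading the rotations directly off the dessin then gives $\sigma_0 = (\rho_0,\rho_1,0)$ (recoloring all vertices of $Q$ black contributes $\rho_0$ on one block and $\rho_1$ on the other) and $\sigma_1 = (\id,\id,1)$, the pure swap coming from the degree-two white vertices; thus $G = \langle (\rho_0,\rho_1,0),(\id,\id,1)\rangle$.

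Because $\sigma_1\notin G_Q^2$, the subgroup $\Gamma := G\cap G_Q^2$ has index $2$ in $G$, is normalized by $\sigma_1$, and equals $\langle(\rho_0,\rho_1),(\rho_1,\rho_0)\rangle$; hence $G = \Gamma\rtimes\langle\sigma_1\rangle$, and it suffices to identify $\Gamma$ together with the (swapping) action of $\sigma_1$. When $r\neq t$ we have $|\rho_0| = t \neq r = |\rho_1|$, so Lemma~\ref{kcleaning}(1) applies for $m\ge 5$ and yields $A_m\times A_m\le\Gamma$; the single small case $\{r,t\}=\{2,3\}$ (so $m=4$) I would dispatch by direct computation. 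A parity count on the generators $(\rho_0,\rho_1)$ and $(\rho_1,\rho_0)$ then pins $\Gamma$ down exactly: both components even ($r,t$ odd) forces $\Gamma = A_m\times A_m$, so $G\cong A_m\wr\Z_2$; both components odd ($r,t$ even) forces $\Gamma = \{(\zeta_1,\zeta_2):\sgn\zeta_1=\sgn\zeta_2\}$, so $G\cong R_2$; mixed parity forces $\Gamma = S_m\times S_m$, so $G\cong S_m\wr\Z_2$. In each subcase I would confirm $\sigma_1=(\id,\id,1)$ lies in the claimed group and compare orders to certify the isomorphism type.

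The case $r = t$ (so $m = 2r-1$ and $|\rho_0| = |\rho_1| = r$) is where Lemma~\ref{kcleaning}(1) no longer applies and where the answer changes from a wreath product to a direct product; I expect this to be the main obstacle. The key is the color-swapping symmetry of the degree-$r$/degree-$r$ double star $Q$: with the labeling of Proposition~\ref{prop:str} I would take the involution $\tau = \prod_{j=2}^{r}(j,\,r+j-1)\in S_m$ and verify that $\tau\rho_0\tau^{-1} = \rho_1$ and $\tau\rho_1\tau^{-1} = \rho_0$. Since $\sgn\tau = (-1)^{r-1}$, the element $\tau$ lands in $G_Q$ in both parities ($A_m$ when $r$ is odd, $S_m$ when $r$ is even). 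Conjugation by $\tau$ is then an order-two automorphism $\psi$ of $G_Q$ interchanging the two generators, so the homomorphism $h\mapsto (h,\psi(h))$ identifies $\Gamma = \langle(\rho_0,\rho_1),(\rho_1,\rho_0)\rangle$ with the graph $\{(h,\psi(h)):h\in G_Q\}\cong G_Q$. Finally I would check that $c := (\tau,\tau,1) = (\tau,\tau,0)\,\sigma_1$ is a central involution of $G$ lying outside $\Gamma$: it commutes with $\sigma_1$, it centralizes $\Gamma$ precisely because $\psi = \mathrm{conj}_\tau$, and $c^2 = (\tau^2,\tau^2,0) = 1$. Hence $G = \Gamma\times\langle c\rangle\cong G_Q\times\Z_2$, which is $A_{2r-1}\times\Z_2$ for $r$ odd and $S_{2r-1}\times\Z_2$ for $r$ even. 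Beyond the $r=t$ collapse, the remaining care points are the parity bookkeeping across the three $r\neq t$ subcases and the handful of small values of $m$ excluded by the hypothesis $m\ge 5$ in Lemma~\ref{kcleaning}.
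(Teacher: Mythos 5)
Your argument is correct, and it splits naturally into a part that mirrors the paper and a part that genuinely departs from it. For $r\neq t$ you do exactly what the paper does: embed $G$ into $S_{r+t-1}\wr\Z_2$ via $\sigma_0\mapsto(\rho_0,\rho_1,0)$, $\sigma_1\mapsto(\id,\id,1)$, invoke Lemma \ref{kcleaning}(1) to trap $A_{r+t-1}\wr\Z_2\le G\le S_{r+t-1}\wr\Z_2$, and finish with the same parity bookkeeping on the signs of the $t$-cycle and $r$-cycle (the paper likewise defers $r+t-1<5$ to hand computation). For $r=t$ your route is different and, to my eye, cleaner: the paper identifies $G$ by exhibiting explicit elements $\sigma_\infty^{2r-1},\tau_1,\tau_2,\tau_3$ in a specially chosen labeling and then quoting Proposition \ref{prop:str} to recognize $\langle\tau_1,\tau_3\rangle$, whereas you stay inside the wreath-product picture, observe that the index-two subgroup $\Gamma=\langle(\rho_0,\rho_1),(\rho_1,\rho_0)\rangle$ is the graph of the inner automorphism $\psi=\mathrm{conj}_\tau$ induced by the arm-swapping involution $\tau=\prod_{j=2}^{r}(j,r+j-1)$ (so $\Gamma\cong G_Q$), and then produce the central involution $c=(\tau,\tau,1)\in G$ to split $G=\Gamma\times\langle c\rangle\cong G_Q\times\Z_2$. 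The checks that make this work all go through: $\tau\rho_0\tau^{-1}=\rho_1$, $\sgn\tau=(-1)^{r-1}$ puts $\tau$ in $G_Q$ in both parities so that $(\tau,\tau,0)\in\Gamma$, and $\psi^2=\id$ gives centrality of $c$. Your version has the advantage of explaining structurally why the answer collapses from a wreath product to a direct product exactly when $r=t$ (the two coordinates are locked together by an automorphism of $Q$), at the cost of having to exhibit the symmetry $\tau$ explicitly; the paper's version is more computational but leans on machinery (Proposition \ref{prop:str}) already established.
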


\begin{proof}
First, we note that this dessin is the composition $P\circ Q$, where $P$ is the 2-star and $Q$ is the dessin of Proposition \ref{iv} with $s = 1$.
\begin{figure}
\begin{flushleft}
\hspace{16mm}\includegraphics[width=0.2\textwidth]{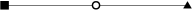}\\
\end{flushleft}

\begin{center}
\includegraphics[width=0.4\textwidth]{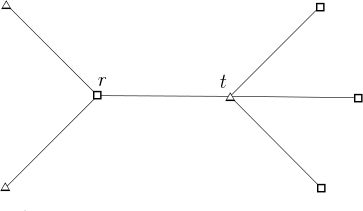}
\hspace{0.05\textwidth} \includegraphics[width=0.47\textwidth]{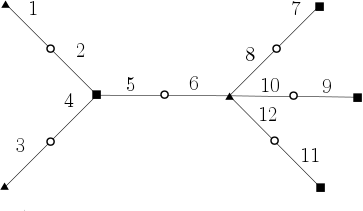}
\end{center}
\vspace{-5mm}
\caption{$P$ and $Q$ on the left, $P\star Q$ on the right}
\end{figure}

Let $G_Q=\langle (1,2,\ldots,r),(r,r+1,\ldots,r+t-1)\rangle $ be the monodromy group of $Q$. By Proposition \ref{iv}, we know that
\[G_Q \cong \begin{cases}
A_{r+t-1} & r,t \text{ both odd} \\
S_{r+t-1} & \text{ otherwise}
\end{cases}\]

\noindent The dessin with passport $[r,t,1^{r+t-2};2^{2r-1}]$ is the composition of $P$ and $Q$, and so its monodromy group $G \le G_Q \wr \mathbb{Z}_2$ \cite{adrianov_composition_1998}. We consider $G$ in two cases: for $r=t$ and $r\not=t$.
\vspace{5mm}

\noindent {\bf Case 1} \\
In the first case, we have $r\not=t$. We label our edges in such a way that $\sigma_0$ and $\sigma_1$ are as follows:
\begin{align*}
\sigma_0 &= (1, 2, \hdots, r)(\overline{r}, \overline{r+1}, \hdots, \overline{r+t-1}) \\
\sigma_1 &= (1,\overline{1})(2,\overline{2})\dots(r+t-1,\overline{r+t-1})
\end{align*}

\noindent Note that $\s$ is the disjoint product of an $r$-cycle with a $t$-cycle; call these cycles $\pi_1$ and $\pi_2$ respectively. Consider the embedding $\phi: G \rightarrow S_{r+t-1} \wr \mathbb{Z}_2$ given by 
\begin{eqnarray*}
\s & \longmapsto & (\pi_1, \pi_2, 0) \\
\ss & \longmapsto & (\id, \id, 1) 
\end{eqnarray*}
Note that $\ss^{-1} \s \ss$ is mapped to $(\pi_2,\pi_1, 0)$. Apply Lemma \ref{kcleaning} to $n = r+t-1$ (assume $n \ge 5$ for now), $\pi_1, \pi_2 \in S_{r+t-1}$. We have $G_Q=\langle \pi_1, \pi_2 \rangle \ge A_n$ as noted above. Lemma \ref{kcleaning} implies that $A_{r+t-1} \wr \mathbb{Z}_2 \le \phi(G) \le S_{r+t-1} \wr \mathbb{Z}_2$. When $r,t$ are odd, both $\pi_1$ and $\pi_2$ are even permutations, and we see $\phi(G)\cong A_{r+t-1} \wr \mathbb{Z}_2$. When $r$ and $t$ have different parity, we know that $\langle \pi_1, \pi_2 \rangle \cong S_{r+t-1}$, so then $\phi(G) \cong S_{r+t-1} \wr \mathbb{Z}_2$. When $r,t$ are both even, for any $(\rho_1,\rho_2,g) \in \phi(G)$, $\rho_1$ and $\rho_2$ will share the same parity. Since we can take $\rho_1 = \pi_1$, an odd permutation, we see that $\phi(G)$ is properly contained in between $A_{r+t-1} \wr \mathbb{Z}_2$ and $S_{r+t-1} \wr \mathbb{Z}_2$. It is in fact the group $R_2$ described earlier after Table 1. In the finite number of cases where $r+t-1 < 5$, one can verify the result by hand.


\vspace{5mm}
\noindent {\bf Case 2} \\
In the second case, we consider $r = t$. We can label our dessin in such a way that:
\begin{align*}
\sigma_0 &= (1, 2, \hdots, r)(\overline{1}, \overline{2}, \hdots, \overline{r}) \\
\sigma_1 &= (1,\overline{r+1})(2,\overline{r+2})\dots(r-1,\overline{2r-1})(r,\overline{r})(r+1,\overline{1})\dots (2r-1,\overline{r-1})
\end{align*}

\noindent Observe that
\begin{align*}
\sigma_\infty^{(2r-1)} &= (1,\overline{1})  \dots (2r-1,\overline{2r-1}), \\
\tau_1=\sigma_1\sigma_0\sigma_1^{-1}& =(r,r+1,\dots, 2r-1)(\overline{r},\overline{r+1},\dots,\overline{2r-1}),\\
\tau_2=\sigma_\infty^{(2r-1)}\sigma_1 &= (\overline{1},\overline{r+1})(1,r+1)(\overline{2},\overline{r+2})(2,r+2)\dots (\overline{r-1},\overline{2r-1})(r-1,2r-1)(r)(\overline{r}),
\end{align*}
and $G_Q=\langle\sigma_{\infty}^{(2r-1)},\tau_1,\tau_2\rangle$ is a subgroup of $S_{2r-1}\times \Z_2$. Furthermore,
$$
\tau_3=\tau_2 \sigma_1\sigma_0\sigma_1^{-1} \tau_2^{-1}=(1,2,\dots,r)(\overline{1},\overline{2},\dots,\overline{r}).
$$
By Proposition \ref{prop:str}, we see that $\langle\tau_1,\tau_3\rangle$ is $S_{2r-1}$ if $r$ even and $A_{2r-1}$ if $r$ odd, and thus we have our result.

\end{proof}


\begin{pro}
The passport $[r^2,1^{4r-3};3^{2r-1}]$ produces a unique tree with monodromy group $G$, where \[ G \cong \begin{cases} 
      A_{2r-1} \wr \mathbb{Z}_3 & r \text{ odd} \\
      R_3 & r \text{ even} \\
\end{cases}
\]
\end{pro}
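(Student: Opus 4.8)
The plan is to exploit the composition structure of the tree together with Lemma \ref{kcleaning}(2), which is tailored to exactly this $\mathbb{Z}_3$ situation. By Proposition \ref{classv}, the dessin is the composition $P\star Q$, where $P$ is the $3$-star (white center, three black leaves) and $Q$ is the tree with passport $[r,1^{r-1};r,1^{r-1}]$, i.e.\ the dessin of $S_{r,r}$. The theorem of Adrianov and Zvonkin then gives $G\le G_Q\wr\mathbb{Z}_3$, and Proposition \ref{prop:str} (applied with $s=1$, so that $s\ne t=r$, $\gcd(s,t)=1$, and $n=2r-1$) identifies the inner group as $G_Q\cong A_{2r-1}$ when $r$ is odd and $G_Q\cong S_{2r-1}$ when $r$ is even. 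Throughout I would assume $r\ge 3$ so that $2r-1\ge 5$; the cases $r=1,2$ are finite and can be checked directly.

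First I would fix the labeling that exposes the wreath structure. Each of the $2r-1$ edges of $Q$ is replaced by a copy of the $3$-star, whose three black leaves are the (recolored) black endpoint, the (recolored) white endpoint, and one new leaf. This partitions the $3(2r-1)$ edges of $\Delta_F$ into three fibers $\mathcal{B},\mathcal{W},\mathcal{N}$ of size $2r-1$, indexed by the edges of $Q$. I would choose labels so that the $3$-cycle of $\sigma_1$ about each white center sends $\mathcal{B}\to\mathcal{W}\to\mathcal{N}$ uniformly; then under the identification $\phi\colon G\hookrightarrow S_{2r-1}\wr\mathbb{Z}_3$ the element $\sigma_1$ becomes the pure shift $(\id,\id,\id,1)$. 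Since the new leaves in $\mathcal{N}$ have degree one, they are fixed by $\sigma_0$, while on $\mathcal{B}$ and $\mathcal{W}$ the rotations comprising $\sigma_0$ reproduce, respectively, the black- and white-vertex permutations of $Q$. Hence $\phi(\sigma_0)=(\pi_0,\pi_1,\id,0)$ with $\langle \pi_0,\pi_1\rangle=G_Q\ge A_{2r-1}$. Verifying this fiber description --- in particular that $\mathcal{N}$ is fixed by $\sigma_0$ and that $\mathcal{B},\mathcal{W}$ are indexed consistently by the edges of $Q$ --- is the conceptual heart of the setup.

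With the generators in this form, conjugating $\phi(\sigma_0)$ by $\phi(\sigma_1)^j$ for $j=0,1,2$ produces exactly the three cyclic shifts $(\pi_0,\pi_1,\id)$, $(\id,\pi_0,\pi_1)$, $(\pi_1,\id,\pi_0)$, which are the generators of the group $\Gamma$ in Lemma \ref{kcleaning}(2). Since $\langle\pi_0,\pi_1\rangle\ge A_{2r-1}$ and $2r-1\ge 5$, that lemma yields $A_{2r-1}^3\le\Gamma\le\phi(G)$, and adjoining the shift $\phi(\sigma_1)$ gives $A_{2r-1}\wr\mathbb{Z}_3\le\phi(G)\le S_{2r-1}\wr\mathbb{Z}_3$. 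When $r$ is odd, $\pi_0,\pi_1\in A_{2r-1}$, so both $\phi(\sigma_0)$ and $\phi(\sigma_1)$ already lie in $A_{2r-1}\wr\mathbb{Z}_3$; the two inclusions then force $G\cong A_{2r-1}\wr\mathbb{Z}_3$.

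The remaining work is the even case, which I expect to be the main obstacle. When $r$ is even, $\pi_0$ and $\pi_1$ are $r$-cycles and hence odd, so $\phi(\sigma_0)=(\pi_0,\pi_1,\id,0)$ has even total sign $\sgn(\pi_0)\sgn(\pi_1)=+1$, as does the shift $\phi(\sigma_1)$; therefore $\phi(G)\le R_3$. For the reverse inclusion I would pass to the quotient $R_3/A_{2r-1}^3$, which is isomorphic to $\mathbb{Z}_2^2\rtimes\mathbb{Z}_3\cong A_4$ (the even sign-patterns form a Klein four-group on which $\mathbb{Z}_3$ acts by cyclic permutation of coordinates). In this quotient $\phi(\sigma_1)$ maps to a generator of the $\mathbb{Z}_3$ and $\phi(\sigma_0)$ maps to a nontrivial element of the Klein four-group; since the $\mathbb{Z}_3$-action permutes the three nonidentity elements of that group cyclically, these two images already generate all of $A_4$. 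As $\phi(G)$ contains $A_{2r-1}^3$, this shows $\phi(G)=R_3$, completing the even case.
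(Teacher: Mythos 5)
Your proposal is correct and follows essentially the same route as the paper's proof: the same decomposition as the $3$-star composed with the double star $S_{r,r}$, the same embedding $\sigma_0\mapsto(\pi_0,\pi_1,\id,0)$, $\sigma_1\mapsto(\id,\id,\id,1)$ into $S_{2r-1}\wr\mathbb{Z}_3$, the same use of Lemma \ref{kcleaning}(2) on the three cyclic shifts, and the same parity/quotient argument for the even case (the paper works in $S_{2r-1}\wr\mathbb{Z}_3/(A_{2r-1}\wr\mathbb{Z}_3)\cong\mathbb{Z}_2^3$ rather than your $R_3/A_{2r-1}^3\cong A_4$, but this is only a bookkeeping difference). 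Your explicit handling of the small cases $2r-1<5$ is a minor point of added care.
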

\begin{proof}
The procedure here is similar to the proof for the previous proposition. We observe that this dessin is the composition $P \circ Q$ where $P$ is the 3-star with passport $[1^3;3]$ and $Q$ is the dessin from Proposition \ref{iv} where $s=1, r=t.$ 

\begin{figure}
\centering
\subfloat[$P$, with vertices marked]{\includegraphics[width=0.25\linewidth]{comp_P-dessin.png}}
\hspace{0.5cm}
\subfloat[$Q$, with vertices marked]{\includegraphics[width=0.25\linewidth]{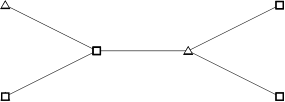}}
\end{figure}

\begin{figure}
\begin{center}
{\includegraphics[width=0.37\textwidth]{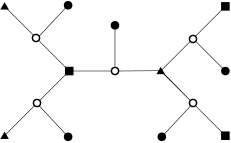}}
\caption{An example of the composition for $r = 3$.}
\end{center}
\end{figure}
\noindent Similarly to the previous passport, we can label the dessin so that
\begin{align*}
\sigma_0 &= (1, 2, \hdots, r)(\overline{r},\overline{r+1}, \hdots, \overline{2r-1})\\
\sigma_1 &= (1, \overline{1}, \widehat{1})(2, \overline{2}, \widehat{2})\hdots(2r-1, \overline{2r-1}, \widehat{2r-1})
\end{align*}
\noindent Note that $\s$ is the product of two $r$-cycles (call them $\pi_1$ and $\pi_2$ respectively) and that $\ss$ is the product of $(2r-1)$ $3$-cycles. Consider the embedding $\phi:G \rightarrow S_{2r-1} \wr \Z_3$ defined by
\begin{eqnarray*}
\s & \longmapsto & (\pi_1, \pi_2, \id, 0) \\
\ss & \longmapsto & (\id,\id,\id,1)
\end{eqnarray*}
Under this homomorphism, successive conjugations of $\s$ by $\ss$ are mapped to $(\id,\pi_1,\pi_2,0)$ and $(\pi_2, \id, \pi_1,0)$. Applying Lemma \ref{kcleaning} to $\pi_1,\pi_2$, and $\phi(G)$, we have $A_{2r-1} \wr \Z_3 \le \phi(G) $. When $r$ is odd, both $\pi_1$ and $\pi_2$ are even permutations, so $A_{2r-1} \wr \Z_3 \ge \phi(G)$, giving a double inclusion. When $r$ is even, we consider the quotient group $S_{2r-1}\wr \Z_3\slash A_{2r-1}\wr\Z_3\cong\Z_2\times\Z_2\times\Z_2$. Observe that when $r$ is even $\phi(G)\le R_3$ and $(\pi_1,\pi_2,\id,0)$ is equal to $(1,1,0)$ in the quotient group $\phi(G)\slash A_{2r-1}\wr\Z_3$. We similarly have $(0,1,1)$ and $(1,0,1)$ in the quotient group. Hence, we see that $\phi(G)$ is an index two subgroup of $S_{2r-1}\wr \Z_3$ and thus $\phi(G)\ge R_3.$
\end{proof}

\begin{pro}
The passport $[3^3, 1^5; 2^7]$ produces a unique tree with monodromy group $G \cong A_7 \wr \mathbb{Z}_2$.
\end{pro}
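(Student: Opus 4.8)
The plan is to exploit the compositional structure already exhibited in Proposition~\ref{cleancat}, where the Shabat polynomial is $F = \beta \circ f$ with $\beta(z) = 4z(1-z)$ and $f$ the Shabat polynomial of the passport $[3^2,1;3,1^4]$. In the language of dessins this means our tree is the composition $P \star Q$, where $P$ is the $2$-star and $Q$ is the diameter-four dessin of Proposition~\ref{iv} obtained from $s = 3$, $r = 3$, $t = 1$. Since $s \neq t$, $\gcd(s,t) = 1$, and both $r$ and $t$ are odd, Proposition~\ref{iv} gives $G_Q \cong A_7$. By the composition theorem of Adrianov and Zvonkin \cite{adrianov_composition_1998}, the monodromy group $G$ embeds into $G_Q \wr \Z_2 = A_7 \wr \Z_2$, so the entire task is to show that this containment is an equality.

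First I would fix the labeling induced by the cleaning operation. Writing $\sigma_0^Q = (2,3,4)(5,6,7)$ and $\sigma_1^Q = (1,2,5)$ for the generators of $G_Q$, the composition yields a labeling in which $\s = \sigma_0^Q \cdot \overline{\sigma_1^Q}$ (the black permutation of $Q$ acting on one copy of the edges, the white permutation on the second copy) while $\ss$ pairs each edge with its mate. Under the embedding $\phi \colon G \to S_7 \wr \Z_2$ this reads $\phi(\s) = (\sigma_0^Q, \sigma_1^Q, 0)$ and $\phi(\ss) = (\id, \id, 1)$, so that $\phi(\ss^{-1}\s\ss) = (\sigma_1^Q, \sigma_0^Q, 0)$. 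The whole problem reduces to showing that $\Gamma := \langle (\sigma_0^Q, \sigma_1^Q),\, (\sigma_1^Q, \sigma_0^Q)\rangle$ is all of $A_7 \times A_7$, since then $\phi(G) \supseteq \langle A_7 \times A_7,\, (\id,\id,1)\rangle = A_7 \wr \Z_2$, forcing equality.

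Here lies the main obstacle, and the reason this case differs from the earlier two-fold wreath products. Both $\sigma_0^Q$ and $\sigma_1^Q$ have order $3$, so Statement~1 of Lemma~\ref{kcleaning} does not apply: the order trick used there produces only the identity upon raising to the common order $3$. Instead I would argue structurally. Both coordinate projections of $\Gamma$ surject onto $\langle \sigma_0^Q, \sigma_1^Q\rangle = A_7$, so by Goursat's lemma $\Gamma$ is either all of $A_7 \times A_7$ or the graph $\{(x, \theta(x)) : x \in A_7\}$ of an automorphism $\theta$ of $A_7$, the simplicity of $A_7$ excluding every intermediate possibility. In the graph case the two generators would force $\theta(\sigma_0^Q) = \sigma_1^Q$; but every automorphism of $A_7$ is induced by conjugation in $S_7$ and therefore preserves cycle type, whereas $\sigma_0^Q$ has cycle type $(3,3,1)$ and $\sigma_1^Q$ has cycle type $(3,1^4)$. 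This contradiction forces $\Gamma = A_7 \times A_7$, completing the argument. I expect this cycle-type/automorphism step to be the crux, since it is precisely what must replace the missing order hypothesis of Lemma~\ref{kcleaning}. (If one prefers to avoid invoking Goursat's lemma, the same conclusion follows by exhibiting an explicit word $w$ with $w(\sigma_1^Q, \sigma_0^Q) = \id$ but $w(\sigma_0^Q, \sigma_1^Q) \neq \id$, which yields a nontrivial element $(\rho, \id) \in \Gamma$ and then feeds into the normal-closure argument from the proof of Lemma~\ref{kcleaning}.)
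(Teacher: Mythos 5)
Your proof is correct, but it is genuinely different from the paper's: the paper disposes of this proposition with the single sentence that it is ``a sporadic case that may be verified by hand,'' offering no structural argument at all. You instead recognize the tree as the cleaning $P\star Q$ of the $[3^2,1;3,1^4]$ dessin, pull $G_Q\cong A_7$ from Proposition~\ref{iv}, and reduce everything to showing $\Gamma=\langle(\sigma_0^Q,\sigma_1^Q),(\sigma_1^Q,\sigma_0^Q)\rangle=A_7\times A_7$. You are right that this is exactly the point where Lemma~\ref{kcleaning} fails here: both generators of $G_Q$ have order $3$, so the order trick yields nothing, and indeed this is presumably why the paper relegated the case to hand verification rather than folding it into the wreath-product machinery of the preceding propositions. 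Your replacement --- Goursat's lemma plus simplicity of $A_7$ to force $\Gamma$ to be either everything or the graph of an automorphism, then $\mathrm{Aut}(A_7)\cong S_7$ (valid since $7\neq 6$) so that automorphisms preserve cycle type, ruling out the graph case because $(3,3,1)\neq(3,1^4)$ --- is sound and is arguably the ``right'' proof: it explains the answer rather than certifying it, it is far more checkable than a by-hand computation in a group of order $12{,}700{,}800$ inside $S_{14}$, and the cycle-type criterion generalizes to other cleanings where the two $G_Q$-generators share an order but not a cycle type. What the paper's approach buys in exchange is only brevity and the avoidance of any hypotheses: a direct computation needs no appeal to $\mathrm{Aut}(A_n)=S_n$ or to Goursat. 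Your parenthetical alternative (an explicit word $w$ with $w(\sigma_1^Q,\sigma_0^Q)=\id$ but $w(\sigma_0^Q,\sigma_1^Q)\neq\id$) is guaranteed to exist by the same cycle-type obstruction, but as stated it still requires a search, so I would keep the Goursat version as the primary argument.
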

\begin{proof} 
This is a sporadic case that may be verified by hand.
\end{proof}

\section{Future Directions}

The reader will notice that there are several obvious pathways left open by this paper. At the very least, the authors would like to see Table \ref{bigtable} completed with the missing entry. That is, what is the monodromy group for the tree with $n$ edges and passport $[s^{r-1},t;r, 1^{(r-1)(s-1)+(t-1)}]$ where $r>1$ is even and $\gcd\left({\gcd(s,t)},{\frac{n}{\gcd(s,t)}}\right)\neq 1$? Incidentally, another unresolved curiousity in Table \ref{bigtable} is that since each entry refers to a tree with passport size one, for each entry there should exist a Shabat polynomial with rational coefficients. However, we were not able to accurately ``rationalize'' the Shabat polynomial given for the tree with passport $[r^2,1^{4r-3};3^{2r-1}]$. 

As for another direction of further inquiry, we note that the present paper focuses exclusively on (planar) trees with passports of size one. However, we know that there exists an exhaustive list of trees of passport size two, and perhaps there are other such lists for passports of even greater size \cite{shabat_plane_1994}. At the very least, it would be interesting to see the complete list of monodromy groups for trees with passport size two in comparison with the completion of Table \ref{bigtable}. Finally, it would also be of interest to see similar results for classes of dessins having at least one cycle and/or for dessins with genus greater than one.

\section{Acknowledgements}
We thank the Willamette University Mathematics Consortium REU for providing a beautiful working environment, as well as the generous support of NSF Grant \#1460982. We also wish to thank Edray Goins for his helpful input on the content of this paper. 
\nocite{*}
\bibliographystyle{plain}
\bibliography{main}

\end{document}